\numberwithin{equation}{section}
\numberwithin{figure}{section}
\theoremstyle{plain}
\newtheorem{thm}{Theorem}[section]
\newtheorem{prop}[thm]{Proposition}
\newtheorem{conjecture}[thm]{Conjecture}
\newtheorem{lem}[thm]{Lemma}
\newtheorem{rem}[thm]{Remark}
  \newcounter{casectr}
\theoremstyle{definition}
\newtheorem{assumption}{Assumption}
\theoremstyle{remark}
\newcommand{\RRR}{\mathbf{R}}
\newcommand{\hdot}{\dot{H}^{1}(\mathbf{R}^{3})}
\newcommand{\hhh}{\mathbf{H}^{3}}
\newcommand{\hlt}{L^{2}({\hhh})}
\newcommand{\hls}{L^{6}({\hhh})}
\newcommand{\limk}{\lim_{k\rightarrow \infty}}
\newcommand{\signot}{\sigma_{0}}
\newcommand{\hnorm}{H^{1}(\hhh)}
\newcommand{\liminfk}{\liminf_{k\rightarrow\infty}}
\newcommand{\limsupk}{\limsup_{k\rightarrow\infty}}
\newcommand{\hyperenergy}{E_{\hhh}}
\newcommand{\rrr}{\mathbf{R}^{3}}
\newcommand{\rls}{L^{6}(\mathbf{R}^{3})}
\newcommand{\rlt}{L^{2}(\mathbf{R}^{3})}
\newcommand{\ek}{E_{\hhh}({u_{k}})}
\providecommand{\casename}{Case}
\begin{document}
\title{ On a variational problem related to NLS on Hyperbolic space}
\begin{center}
\author{Chenjie Fan $^{\ddagger} $ $^\dagger$}, \author{ Peter Kleinhenz$^\sharp$}
\end{center}
\thanks{\quad \\$^{\ddagger} $Department of Mathematics,  Massachusetts Institute of Technology,  77 Massachusetts Ave,  Cambridge,  MA 02139-4307 USA. email:
cjfan@math.mit.edu.\\
$^\dagger$ The author is partially supported by NSF Grant DMS 1069225, DMS 1362509 and DMS 1462401.\\
$^\sharp$ Northwestern University, Department of Mathematics, 2033 Sheridan Rd, Evanston Il, 60201 USA. email:pbk@math.northwestern.edu}

\maketitle
\begin{abstract}
Motivated by NLS, We study a variational problem on hyperbolic space. In particular, we  compute its minimum value and we show the minimizer does not exist.
\end{abstract}

\section{Introduction}
\subsection{Statement of problem and results}
In this short note, we consider the following minimizing problem on 3D hyperbolic space:
\begin{equation}
\begin{cases}
J(f):=\frac{E_{\hhh}(f)}{\|f\|_{\hlt}^{2}}, f\in H^{1}_{rad}(\hhh),\\
E_{\hhh}(f)<E_{\RRR^{3}}(Q),\\
\|\nabla_{\hhh} f\|_{L^{2}(\hhh)}<\|\nabla Q\|_{L^{2}(\RRR^{3})}.
\end{cases}
\end{equation}

Here
$$E_{\hhh}(f):=\frac{1}{2}\|\nabla_{\hhh} f\|_{L^{2}(\hhh)}^{2}-\frac{1}{6}\|f\|^{6}_{L^{6}(\hhh)},\quad  E_{\RRR^{3}}(Q):=\frac{1}{2}\|\nabla Q\|_{L^{2}(\RRR^{3})}^{2}-\frac{1}{6}\|Q\|_{L^{6}(\RRR^{3})}^{6}, $$
$$  H^1_{rad}(\hhh) := \{ f \in H^1(\hhh); f \text{ is radial} \}, $$
and $Q$ is the ground state on Euclidean space, i.e. the unique positive $\dot{H}^{1}$ radial solution to\footnote{When one talks about positive solutions, there is no difference between $|W|^{4}W$ and $W^{5}$.} 
\begin{equation}
\label{eucgroundstate}
\Delta W+W^{5}=0.
\end{equation}
We show 
\begin{thm}\label{thmmain}
Let $\Omega  \subset  H^{1}_{rad}(\hhh)$ be defined as
$$\Omega:=\{f|f\in H^{1}_{rad}(\hhh), f\neq 0,  E_{\hhh}(f)<E_{\RRR^{3}}(Q), \|\nabla_{\hhh} f\|_{L^2(\hhh)}<\|\nabla Q\|_{L^{2}(\RRR^{3})}\}.$$
Then we have
\begin{equation}
\inf_{f\in \Omega}J(f)=\frac{1}{2}
\end{equation}
\end{thm}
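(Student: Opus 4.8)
The plan is to normalise the functional around the target constant and then split the argument into a pointwise lower bound and the construction of a minimising sequence. Writing $x=\|\nabla_{\hhh}f\|_{L^2(\hhh)}^2$, $m=\|f\|_{\hlt}^2$, $s=\|f\|_{\hls}^6$, and setting $D:=x-m$, a direct computation gives
\[
J(f)-\tfrac12=\frac{\tfrac12(x-m)-\tfrac16 s}{m}=\frac{3D-s}{6m}.
\]
Hence Theorem \ref{thmmain} is equivalent to proving that $3D-s>0$ for every $f\in\Omega$, together with producing a sequence $f_k\in\Omega$ along which $(3D-s)/m\to0$. The two analytic inputs I would invoke are the spectral gap of the hyperbolic Laplacian, $\|\nabla_{\hhh}f\|_{L^2(\hhh)}^2\ge\|f\|_{\hlt}^2$ (equivalently $D\ge0$, since $\inf\mathrm{spec}(-\Delta_{\hhh})=\tfrac{(3-1)^2}{4}=1$), and the sharp Poincar\'e--Sobolev inequality of Mancini--Sandeep, $\|f\|_{\hls}^2\le C_S\big(\|\nabla_{\hhh}f\|_{L^2(\hhh)}^2-\|f\|_{\hlt}^2\big)$, where $C_S$ is the sharp \emph{Euclidean} Sobolev constant (normalised by $\|g\|_{L^6(\rrr)}^2\le C_S\|\nabla g\|_{L^2(\rrr)}^2$) and the inequality is \emph{not} attained on $\hhh$.

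For the lower bound I would first pin down $C_S$ through $Q$: multiplying \eqref{eucgroundstate} by $Q$ yields $\|\nabla Q\|_{L^2(\rrr)}^2=\|Q\|_{L^6(\rrr)}^6$, and combined with equality in the Euclidean Sobolev inequality this gives $\|\nabla Q\|_{L^2(\rrr)}^2=C_S^{-3/2}$ and $E_{\rrr}(Q)=\tfrac13\|\nabla Q\|_{L^2(\rrr)}^2$. Now take $f\in\Omega$. Since $f\neq0$ we have $m>0$, so $D=x-m<x<\|\nabla Q\|_{L^2(\rrr)}^2=C_S^{-3/2}$ by the gradient constraint. Cubing the Poincar\'e--Sobolev inequality,
\[
s\le C_S^3 D^3=(C_S^3 D^2)\,D<(C_S^3\cdot C_S^{-3})\,D=D,
\]
so that $3D-s>3D-D=2D\ge0$ and therefore $J(f)>\tfrac12$ for \emph{every} $f\in\Omega$. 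In particular a minimiser cannot exist once the value $\tfrac12$ is confirmed. I note that only the gradient constraint enters here; the energy constraint is not needed for the lower bound.

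For the matching upper bound I would build a minimising sequence from approximate bottom-of-spectrum eigenfunctions. Because the $L^2$-spectrum of $-\Delta_{\hhh}$ is $[1,\infty)$ and the threshold is reached by the radial resonance $\tfrac{r}{\sinh r}$ (the degeneration as $\lambda\to0$ of the radial solution $\tfrac{\sin(\lambda r)}{\lambda\sinh r}$ of $-\Delta u=(1+\lambda^2)u$), there is a sequence of radial, compactly supported $\phi_k$ with $\|\nabla_{\hhh}\phi_k\|_{L^2(\hhh)}^2/\|\phi_k\|_{\hlt}^2\to1$, obtained by truncating and normalising. Set $f_k=\lambda_k\phi_k$, choosing $\lambda_k$ so that $\|\nabla_{\hhh}f_k\|_{L^2(\hhh)}^2=\tfrac12\|\nabla Q\|_{L^2(\rrr)}^2$. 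Then the gradient constraint holds, and $E_{\hhh}(f_k)\le\tfrac12\|\nabla_{\hhh}f_k\|_{L^2(\hhh)}^2=\tfrac14\|\nabla Q\|_{L^2(\rrr)}^2<\tfrac13\|\nabla Q\|_{L^2(\rrr)}^2=E_{\rrr}(Q)$, so $f_k\in\Omega$. Finally, since $s\ge0$,
\[
0<J(f_k)-\tfrac12=\frac{3D(f_k)-s(f_k)}{6\,m(f_k)}\le\frac{D(f_k)}{2\,m(f_k)}=\frac12\Big(\frac{\|\nabla_{\hhh}\phi_k\|_{L^2(\hhh)}^2}{\|\phi_k\|_{\hlt}^2}-1\Big)\longrightarrow0,
\]
the amplitude $\lambda_k$ cancelling in the last ratio. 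This yields $\inf_{\Omega}J=\tfrac12$, not attained.

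The genuinely hard input is the sharp Poincar\'e--Sobolev inequality on $\hhh$ with the Euclidean best constant and the spectral-gap deficit $\|\nabla_{\hhh}f\|^2-\|f\|^2$; everything downstream of it is elementary algebra. The second delicate point is the spectral analysis producing \emph{radial} approximate eigenfunctions whose Rayleigh quotient tends to $1$ while $\|f\|_{\hls}^6$ stays finite: the cutoff must be chosen so that the error it introduces in the numerator is negligible compared with $\|\phi_k\|_{\hlt}^2$. I expect the apparent tension between ``spreading out to saturate the spectral gap'' and ``staying inside $\Omega$'' to be the place demanding care, but the amplitude rescaling above decouples the two constraints from the Rayleigh-quotient limit, so no real obstruction remains.
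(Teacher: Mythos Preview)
Your argument is correct and substantially more direct than the paper's. The paper proves $\inf_\Omega J\ge\tfrac12$ by contradiction: assuming $\inf_\Omega J<\tfrac12$, it runs a lengthy compactness argument (no escape to infinity, no concentration at the origin, then Euclidean profile decomposition) to extract a minimiser in $\bar\Omega\setminus\{0\}$, whose Euler--Lagrange equation contradicts the Mancini--Sandeep nonexistence result for $\Delta_{\hhh}u+u^5+\lambda u=0$. You instead invoke the sharp Poincar\'e--Sobolev inequality $\|f\|_{\hls}^2\le C_S\big(\|\nabla_{\hhh}f\|_{\hlt}^2-\|f\|_{\hlt}^2\big)$ (also from Mancini--Sandeep, via the conformal equivalence with $\rrr^3$) and obtain $J(f)>\tfrac12$ by a two-line algebraic manipulation. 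Both routes ultimately rest on \cite{mancini2008semilinear}, but on different theorems from it: the paper uses the elliptic nonexistence statement, you use the sharp functional inequality. Your approach avoids all the concentration-compactness machinery and yields, as you note, the slightly sharper observation that the energy constraint is irrelevant for the lower bound; the paper's approach, on the other hand, is closer in spirit to the PDE application motivating the problem and produces the compactness statements (Lemmas on non-escaping and non-concentration) that may be of independent use. The upper bound arguments are essentially the same: both exploit that the bottom of the spectrum of $-\Delta_{\hhh}$ equals $1$ and is not attained, and send a rescaled approximate eigenfunction into $\Omega$.
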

\subsection{Motivation}

We are interested in $J$ because of its connection to the 
focusing energy critical NLS on hyperbolic space:
\begin{equation}\label{hnls}
\begin{cases}
iu_{t}+\Delta_{\hhh} u=-|u|^{4}u,\\
u_{0}\in H^{1}(\hhh).
\end{cases}
\end{equation}
Motivated by the seminal result of Kenig and Merle, \cite{kenig2006global}, one might want to show the following:
\begin{conjecture}[formal conjecture]\label{conj1}
Let $u$ be the solution to \eqref{hnls}, such that
$$E_{\hhh}(u_{0})<E_{\hhh}(Q_{*}), \quad \|\nabla_{\hhh}u_{0}\|_{L^{2}(\hhh)}<\|\nabla _{\hhh}Q_{*}\|_{L^{2}(\hhh)},$$ where $Q_{*}$ is the unique\footnote{In hyperbolic space, $H^{1}(\hhh)$ and $\dot{H}^{1}(\hhh)$ are the same space.} $H^{1}(\hhh)$ positive solution to the ground state equation on hyperbolic space 
$$\Delta_{\hhh}  Q_{*} + Q_{*}^{5}=0.$$
Then $u$ is global and scatters in the sense that
\begin{equation}
\|u(t,x)\|_{L^{10}_{\RRR\times \hhh}}\lesssim 1.
\end{equation}
\end{conjecture}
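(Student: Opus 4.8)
The plan is to follow the concentration--compactness/rigidity scheme of Kenig and Merle, adapted to the geometry of $\hhh$. First I would set up the local theory for \eqref{hnls}: dispersive and Strichartz estimates for $e^{it\Delta_{\hhh}}$ (which, because of the exponential volume growth of $\hhh$ and the Kunze--Stein phenomenon, are at least as strong as their Euclidean counterparts and in fact admit a wider Strichartz range), followed by local well-posedness in $\hnorm$, a small-data global scattering statement, and a long-time perturbation/stability lemma measured in a suitable $L^{10}_{t,x}$-type Strichartz norm. These estimates are the engine of the whole argument and, once the linear bounds are established, are essentially routine.

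Second comes the variational, or \emph{energy-trapping}, step. Under the sub-threshold hypotheses I would show, using the sharp Gagliardo--Nirenberg--Sobolev inequality on $\hhh$ whose sharp constant is encoded in the functional $J$ and computed in Theorem \ref{thmmain}, that the set $\{E_{\hhh}(u_0)<E_{\hhh}(Q_*)\}$ splits into two flow-invariant components, and that on the component $\|\nabla_{\hhh}u_0\|_{\hlt}<\|\nabla_{\hhh}Q_*\|_{\hlt}$ one has a uniform gap $1-\|\nabla_{\hhh}u(t)\|_{\hlt}^2/\|\nabla_{\hhh}Q_*\|_{\hlt}^2\ge \delta>0$ for all $t$. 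This makes the conserved energy coercive, controls $\|\nabla_{\hhh}u(t)\|_{\hlt}$ uniformly, yields a priori global existence in $\hnorm$, and reduces the conjecture to the quantitative $L^{10}_{t,x}$ bound.

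Third, concentration--compactness. Defining the critical energy $E_c$ as the supremum of levels below which every sub-threshold solution obeys the $L^{10}_{t,x}$ bound, I would argue by contradiction. A linear profile decomposition adapted to $\hhh$ --- splitting a bounded sequence in $\hnorm$ into rescaled, concentrating Euclidean bubbles $\eucprofn$ (which, after rescaling, solve the Euclidean critical equation and are governed by $Q$), genuinely hyperbolic profiles $\hypprofn$ living at scale $\sim 1$, and a vanishing remainder --- combined with the stability lemma and an almost-orthogonality/nonlinear-profile argument, would produce a minimal non-scattering solution (the critical element) whose orbit $\{u(t):t\in\RRR\}$ is precompact in $\hnorm$ modulo the isometry group of $\hhh$. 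The rigidity step would then rule out this critical element via a localized virial/Morawetz identity with a weight adapted to the hyperbolic distance, compactness together with the Step~2 coercivity bounding the localization scales and forcing $u\equiv 0$.

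I expect the decisive obstacle to be the identification of the threshold itself. Theorem \ref{thmmain} shows that the relevant variational problem attains its infimum $\tfrac12$ but has no minimizer, which signals that the putative hyperbolic ground state $Q_*$ appearing in the statement may simply fail to exist, and that the true dynamical threshold is governed by the Euclidean ground state energy $E_{\RRR^{3}}(Q)$ rather than by $E_{\hhh}(Q_*)$. Reconciling the conjecture's threshold with this phenomenon --- i.e.\ showing that it is the concentrating Euclidean bubbles $\eucprofn$, and not a genuine hyperbolic soliton, that saturate the energy constraint in the profile decomposition --- is exactly where the analysis is most delicate, and is the reason the statement is posed only as a formal conjecture.
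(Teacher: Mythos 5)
Your proposal cannot succeed as written, because the statement you are trying to prove is vacuous: the object $Q_{*}$ in its hypotheses does not exist. This is precisely why the paper labels it a \emph{formal} conjecture and offers no proof of it --- immediately after stating it, the paper invokes the result of \cite{mancini2008semilinear} (Lemma \ref{nonextencepre}, applied with $\lambda=0$) to note that there is no nontrivial positive $H^{1}(\hhh)$ solution of $\Delta_{\hhh}Q_{*}+Q_{*}^{5}=0$, so the quantities $E_{\hhh}(Q_{*})$ and $\|\nabla_{\hhh}Q_{*}\|_{L^{2}(\hhh)}$ are undefined and your energy-trapping step (Step 2) has no threshold to trap against. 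You half-detect this in your closing paragraph, but you hedge (``may simply fail to exist'') and treat it as the delicate analytical heart of the argument; it is not delicate, it is a cited theorem, and it kills the statement before any Strichartz estimate, profile decomposition, or virial identity enters. A secondary error: you assert that the sharp Sobolev constant on $\hhh$ is ``encoded in the functional $J$ and computed in Theorem \ref{thmmain}.'' It is not. The functional $J(f)=E_{\hhh}(f)/\|f\|_{\hlt}^{2}$ and its infimum $\tfrac{1}{2}$ reflect the bottom of the $L^{2}$-spectrum of $-\Delta_{\hhh}$ being $1$; the sharp Sobolev constant on $\hhh$ is instead the \emph{Euclidean} constant $C_{3}$ (Lemma \ref{sobhyp}), and it is never attained --- a minimizer would solve the ground state equation, contradicting Lemma \ref{nonextence} (this is exactly Case 3 in the proof of Lemma \ref{nobound}).

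The paper's actual move is to repair the statement rather than prove it: Conjecture \ref{mainconj} replaces the thresholds by the Euclidean quantities $E_{\rrr}(Q)$ and $\|\nabla Q\|_{L^{2}(\rrr)}$, justified by the observation that the variational step in \cite{kenig2006global} uses only that the ground state's norms are expressible through the best Sobolev constant, which is the same on $\hhh$ as on $\rrr$. Even then the paper does not prove the corrected conjecture; Theorem \ref{thmmain} serves the narrower purpose of showing $\inf_{\Omega}J=\tfrac{1}{2}$ (not attained), hence that the corrected sub-threshold condition \eqref{maincondition} excludes Banica's blow-up regime $E_{\hhh}(u_{0})<\tfrac{1}{2}\|u_{0}\|_{\hlt}^{2}$ from \cite{banica2007nonlinear}. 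Relative to that corrected conjecture, your Steps 1--3 are a sensible Kenig--Merle sketch, and the paper's own remark concedes that global existence should follow from \cite{kenig2006global} together with the profile decomposition of \cite{ionescu2012global}; but the rigidity/scattering step requires genuinely new virial computations on $\hhh$, which the paper explicitly declines to carry out. So even after the restatement, your outline is a program, not a proof.
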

This conjecture, seemingly natural, is not even correctly stated. {As} shown in \cite{mancini2008semilinear}, there is no nontrivial $H^{1}(H^{3})$ positive solution to 
\begin{equation}
\begin{cases}
\Delta_{\hhh} f+\lambda f+f^{5}=0,\\
f\in H^{1}(\hhh), \lambda\in \RRR.
\end{cases}
\end{equation}

Let us reconsider the NLS on hyperbolic space. Roughly speaking, the road map suggested by \cite{kenig2006global} has three ingredients:
\begin{enumerate}
\item  Variational inequality/ energy trapping argument.
\item Concentration compactness/profile decomposition.
\item Rigidity argument.
\end{enumerate}
In the case of NLS on 3D hyperbolic space, the variational inequality, which is related to the  ground state in Euclidean case, seems to collapse at first glance since there is no ground state here. Concentration compactness is fine, see \cite{ionescu2012global}. The rigidity argument, which is related to virial identity, is much more complicated than in the Euclidean space.

However, one can make the observation that the variational inequality argument in \cite{kenig2006global} is only related to the fact that some of the ground state's norms can be expressed in terms of the best constant for the Sobolev inequality. Thus one possible way to modify Conjecture \ref{conj1} is the following:
\begin{conjecture}\label{mainconj}
Let $u$ be the solution to \eqref{hnls}
, such that
\begin{equation}\label{maincondition}
E_{\hhh}(u_{0})<E_{\rrr}(Q), \quad \|\nabla_{\hhh} u_{0}\|_{L^{2}(\hhh)}<\|\nabla Q \|_{L^{2}(\rrr)},
\end{equation}
where $Q $ is the unique positive $\dot{H}^{1}(\RRR^{3})$ radial solution to the ground state equation on Euclidean space
$$\Delta Q+Q^{5}=0.$$
Then $u$ is global and scatters in the sense that
\begin{equation}
\|u(t,x)\|_{L^{10}_{\RRR\times \hhh}}\lesssim 1.
\end{equation}
\end{conjecture}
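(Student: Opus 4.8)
To prove Conjecture \ref{mainconj}, the plan is to transplant the concentration-compactness/rigidity roadmap of \cite{kenig2006global} to $\hhh$. First I would record the local theory: the energy-critical problem \eqref{hnls} is locally well-posed in $H^1(\hhh)$ and small $H^1$ data scatter in $L^{10}_{\RRR\times\hhh}$, via the global-in-time Strichartz estimates on $\hhh$, which are at least as strong as the Euclidean ones because of the additional dispersive (Kunze--Stein) gains; the relevant linear and nonlinear theory is already available from the defocusing analysis in \cite{ionescu2012global}. Next comes the variational/energy-trapping step, and this is where Theorem \ref{thmmain} enters. The decisive point is that the sharp constant in the Sobolev embedding $\|f\|_{L^6(\hhh)}\lesssim\|\nabla_{\hhh}f\|_{L^2(\hhh)}$ equals the Euclidean one and is not attained on $\hhh$; equivalently, the static threshold is governed by the Euclidean ground state $Q$ even though no hyperbolic ground state exists (\cite{mancini2008semilinear}). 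From this and the hypotheses \eqref{maincondition} I would show that $\|\nabla_{\hhh}u(t)\|_{L^2(\hhh)}<\|\nabla Q\|_{L^2(\rrr)}$ is preserved by the flow, that the energy stays positive and coercive with $E_{\hhh}(u)\sim\|\nabla_{\hhh}u\|_{L^2(\hhh)}^2$ uniformly in $t$, and hence that $\sup_t\|u(t)\|_{H^1(\hhh)}<\infty$; the gap $\inf_\Omega J=\tfrac12$ supplied by Theorem \ref{thmmain} quantifies this coercivity and certifies that no static object obstructs scattering below the threshold.

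Assuming scattering nonetheless fails, I would run the standard contradiction argument to produce a critical energy level $E_c$ with $0<E_c\le E_{\rrr}(Q)$ and a minimal non-scattering solution whose orbit is precompact in $H^1(\hhh)$ modulo the (isometry-reduced) symmetries. The engine is a linear profile decomposition for $e^{it\Delta_{\hhh}}$ adapted to \eqref{hnls}, and here the hyperbolic geometry forces two distinct families: Euclidean profiles $\eucprofn$, which concentrate at a point and, in the rescaled limit, are governed by the Euclidean equation \eqref{eucgroundstate} and Euclidean NLS; and hyperbolic profiles $\hypprofn$, which live at unit scale and are genuinely hyperbolic. After establishing orthogonality of the energies and the nonlinear approximation (a perturbation/stability lemma on $\hhh$), the analysis splits. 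A concentrating Euclidean profile sees only the local Euclidean structure, so its sub-threshold condition, via the equality of sharp constants, reduces exactly to the Euclidean sub-threshold hypothesis and it scatters by the focusing Euclidean result of \cite{kenig2006global}. Hyperbolic profiles enjoy the extra coercivity coming from the spectral gap $\|\nabla_{\hhh}f\|_{L^2}^2\ge\|f\|_{L^2}^2$, which is also the mechanism behind Theorem \ref{thmmain}. The usual bookkeeping then forces a single hyperbolic-type profile, the critical element.

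The final and hardest step is rigidity: showing the precompact critical element must vanish identically. In the Euclidean energy-critical case this is a localized virial/Morawetz computation, but on $\hhh$ the virial identity carries curvature corrections, and the correct multiplier must be adapted to the exponential volume growth; naive truncations of the geodesic distance produce error terms whose signs are not manifest. I would use a radial virial weight compatible with the isometry-reduced orbit, invoke compactness to localize the solution in a fixed ball, and extract the favorable sign from the coercive lower bound of Theorem \ref{thmmain} together with the mass term coming from the spectral gap, aiming for a strictly monotone quantity whose boundedness forces the critical element to be zero, contradicting non-scattering. I expect this rigidity step, specifically controlling the hyperbolic virial errors in the absence of any scaling symmetry, to be the principal obstacle, and it is precisely the difficulty flagged in the discussion following \eqref{hnls}.
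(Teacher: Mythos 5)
The statement you are trying to prove is Conjecture \ref{mainconj}: the paper does not prove it, and explicitly declines to. The authors' remark after the conjecture says that the global existence half could be extracted from the machinery of \cite{kenig2006global} and \cite{ionescu2012global}, but that scattering is ``the truly subtle problem'' requiring ``some new computation related to virial identity'' which they do not discuss; the entire purpose of the paper is the much more modest Theorem \ref{thmmain}, which only verifies that the sub-threshold condition \eqref{maincondition} is not vacuous and in particular excludes Banica's blow-up regime $E_{\hhh}(u_{0})<\frac{1}{2}\|u_{0}\|_{L^{2}(\hhh)}^{2}$ from \cite{banica2007nonlinear}. Your proposal has exactly the same lacuna: you assemble a plausible Kenig--Merle roadmap (local theory and stability on $\hhh$, energy trapping via Lemma \ref{energytrapping} and a continuity argument, profile decomposition with Euclidean-scale and hyperbolic-scale profiles, extraction of a critical element), but at the rigidity step you only state an intention --- ``I would use a radial virial weight\ldots aiming for a strictly monotone quantity'' --- and yourself concede it is the principal obstacle. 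Announcing that the hyperbolic virial errors must be controlled is not the same as controlling them; in the absence of scaling symmetry and with exponential volume growth there is no known multiplier with manifestly favorable sign here, which is precisely why the statement remains a conjecture. So this is a genuine gap, indeed the decisive one: the proposal is a research program, not a proof.

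Two smaller inaccuracies in the parts you do sketch. First, you invoke Theorem \ref{thmmain} as the source of energy trapping and of coercivity along the flow, but in the paper that role is played by Lemma \ref{energytrapping} (a direct consequence of the sharp Sobolev constants of Lemmas \ref{sobeuc} and \ref{sobhyp} being equal); the value $\inf_{\Omega}J=\frac{1}{2}$ is logically independent of the trapping argument and would enter only through the mass--energy comparison with \cite{banica2007nonlinear}. Second, Theorem \ref{thmmain} is stated for $H^{1}_{rad}(\hhh)$, while Conjecture \ref{mainconj} imposes no radiality, so any use of the variational theorem in a non-radial concentration-compactness scheme would need the radial restriction removed or the scheme restricted to radial data.
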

One potential obstacle to prove this result is Banica's result \cite{banica2007nonlinear}, which implies the following:
\begin{thm}
Assume $u_{0}\in H^{1}_{rad}(\hhh)$, such that
$$E_{\hhh}(u_{0})<\frac{1}{2}\|u_{0}\|_{L^{2}(\hhh)}^{2}, \|ru_{0}\|_{L^{2}(\hhh)}<\infty.$$
Let $u$ be the solution to \eqref{hnls} with initial data $u_{0}$, then $u$ must blow up in finite time. 
\end{thm}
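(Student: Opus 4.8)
The plan is to run a Glassey-type concavity argument built on the virial functional
$$V(t) := \int_{\hhh} r^2\,|u(t,x)|^2\,d\mu(x),$$
where $r=r(x)$ is the geodesic distance to a fixed origin and $d\mu$ is the hyperbolic volume. The hypothesis $\|ru_0\|_{L^2(\hhh)}<\infty$ makes $V(0)$ finite, while $V(t)\geq 0$ always. The whole argument reduces to showing that $V''(t)$ is bounded above by a strictly negative constant, which is incompatible with a globally defined nonnegative $V$, hence forces finite-time blow-up. The hypothesis $E_{\hhh}(u_0)<\frac12\|u_0\|_{L^2(\hhh)}^2$ is exactly the condition that will produce this negative bound.

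First I would specialize the standard virial identity for $iu_t+\Delta_{\hhh}u=-|u|^4u$ with a radial weight $\psi$,
$$V''(t)=\int_{\hhh}\Big(4\,\mathrm{Re}\,\nabla^2\psi(\nabla u,\nabla\bar u)-(\Delta_{\hhh}^2\psi)\,|u|^2\Big)\,d\mu-\frac{4}{3}\int_{\hhh}(\Delta_{\hhh}\psi)\,|u|^6\,d\mu,$$
to $\psi=r^2$. Two geometric computations drive everything. From $\Delta_{\hhh}r=2\coth r$ one gets $\Delta_{\hhh}r^2=2+4r\coth r$, and the identity $\coth^2 r-\operatorname{csch}^2 r=1$ gives the clean cancellation $\Delta_{\hhh}(r\coth r)=2$, hence the crucial flat value $\Delta_{\hhh}^2 r^2=8$. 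Second, the Hessian is $\nabla^2 r^2=2\,dr\otimes dr+2r\coth r\,(g-dr\otimes dr)$, so for a \emph{radial} function the angular part of $\nabla u$ drops out and $\mathrm{Re}\,\nabla^2 r^2(\nabla u,\nabla\bar u)=2|\nabla u|^2$, making the first integral equal to exactly $8\|\nabla_{\hhh}u\|_{L^2(\hhh)}^2$. Substituting these and eliminating $\|u\|_{L^6(\hhh)}^6$ via $\|\nabla_{\hhh}u\|_{L^2(\hhh)}^2=2E_{\hhh}(u)+\tfrac13\|u\|_{L^6(\hhh)}^6$, the $L^6$ terms cancel and I obtain
$$V''(t)=16\,E_{\hhh}(u)-8\,\|u\|_{L^2(\hhh)}^2-\frac{16}{3}\int_{\hhh}r\coth r\,|u|^6\,d\mu.$$

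Now I would invoke conservation of energy and mass, $E_{\hhh}(u(t))=E_{\hhh}(u_0)$ and $\|u(t)\|_{L^2(\hhh)}^2=\|u_0\|_{L^2(\hhh)}^2$, together with $r\coth r\geq 1>0$, which makes the last integral nonnegative. This yields
$$V''(t)\leq 16\Big(E_{\hhh}(u_0)-\tfrac12\|u_0\|_{L^2(\hhh)}^2\Big)=:-\delta<0$$
for every $t$ in the maximal interval of existence. If the solution were global, integrating twice would give $V(t)\leq V(0)+V'(0)\,t-\tfrac{\delta}{2}t^2\to-\infty$, contradicting $V\geq 0$; therefore the maximal time of existence is finite and $u$ blows up. Note that it is precisely the value $\Delta_{\hhh}^2 r^2=8$ that converts $16E$ into $16(E-\tfrac12\|u_0\|^2_{L^2(\hhh)})$, so the curvature of $\hhh$ is exactly what shifts the blow-up threshold from the Euclidean $E<0$ to $E<\tfrac12\|u_0\|_{L^2(\hhh)}^2$.

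The main obstacle is not the algebra but the rigorous justification of the virial computation: one must show that $\|ru(t)\|_{L^2(\hhh)}$ stays finite and that $t\mapsto V(t)$ is genuinely twice differentiable with the identity above. I would handle this by the usual truncation, replacing $r^2$ by a smooth compactly supported approximation $\psi_R$, differentiating legitimately, estimating the error terms supported in $\{r\gtrsim R\}$, and passing to the limit $R\to\infty$ using the finite-variance hypothesis and local well-posedness and regularity of $u$. I would also emphasize that radiality of $u_0$ (preserved by the flow) is genuinely needed: for non-radial data the Hessian contributes the extra nonnegative term $2r\coth r\,|\nabla_{\mathrm{ang}}u|^2$, which raises $V''$ and destroys the sign, so the argument is confined to $H^1_{rad}(\hhh)$.
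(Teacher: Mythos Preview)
The paper does not give its own proof of this statement: it is quoted verbatim as a consequence of Banica's result \cite{banica2007nonlinear} and used only as motivation. Your proposal is therefore not competing with any argument in the paper; rather, it is a faithful reconstruction of the Glassey-type virial argument behind the cited result. The key geometric identities $\Delta_{\hhh}r^{2}=2+4r\coth r$ and $\Delta_{\hhh}^{2}r^{2}=8$ are correct, and after the cancellation you obtain $V''(t)=16E_{\hhh}(u_{0})-8\|u_{0}\|_{L^{2}(\hhh)}^{2}-\tfrac{16}{3}\int r\coth r\,|u|^{6}\leq 16\big(E_{\hhh}(u_{0})-\tfrac12\|u_{0}\|_{L^{2}(\hhh)}^{2}\big)<0$, which is exactly the mechanism in Banica's paper. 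Your remark that radiality is genuinely used (the angular Hessian term $2r\coth r\,|\nabla_{\mathrm{ang}}u|^{2}$ would otherwise spoil the sign) and your caveat about justifying the identity via truncation are both appropriate.
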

So {before attempting to prove} Conjecture  \ref{mainconj}, one would like to know if the condition \eqref{maincondition} excludes the case {$E_{\hhh}(u_{0})<\frac{1}{2}\|u_{0}\|_{L^{2}(\hhh)}^{2}$.}

This is our motivation for studying the minimizing problem of the functional $J$, and our main theorem gives a positive answer.
\begin{rem}
If one is familiar with the results of \cite{kenig2006global} and the profile decomposition in the hyperbolic space \cite{ionescu2012global}, then one should be able to follow the argument in \cite{ionescu2012global} to show all data described in Conjecture \ref{mainconj} generates global solutions, which indeed implies our Theorem. Nevertheless, we will give a self contained proof, which is independent of the NLS. Note, the truly subtle problem, is to show the data described in Conjecture \ref{mainconj} will scatter. To show this, one will need some new computation related to virial identity. We do not discuss this matter here. One may see \cite{banica2014global} and the reference in that article for this direction.
\end{rem}
\subsection{Notations}
We view the 3D hyperbolic space in polar coordinates $(r,w)$ with Lorenzian metric on $\RRR^4$
\begin{equation}
ds^2 = dr^2 + \sinh^2 r d \omega^2,
\end{equation}
with the volume element 
\begin{equation}
\int_{\hhh} u(x) d \Omega = \int_0^{\infty} \int_{S^2} u(r,\omega) \sinh^2 r \, d \omega d r.
\end{equation}
We use the notation $u=u(x)=u(|x|)=u(r)$ for any radial function $u$. Note with the usual polar coordinates in $\RRR^{3}$, any function in hyperbolic space  can also be naturally regarded as a function in Euclidean space. We use $\nabla_{\hhh}$ to denote the co-variant derivative on hyperbolic space and we use $\nabla$ to denote the usual gradient on Euclidean space.
We write  $A\lesssim B$ when 
 $A\leq CB$, for some universal constant $C$,  we write $A\gtrsim B$ if $B\lesssim A$. We write $A\sim B$ if $A\lesssim B$ and $B\lesssim A$. 
\subsection{Preliminary}

\subsubsection{Sobolev embedding}
We quote Sobolev embedding results for $\rrr$ and $\hhh$. 
First by Aubin \cite{aubin1976espaces}
\begin{lem} 
\label{sobeuc}
Let $u \in \dot{H}^1(\rrr)$ then 
\begin{equation}
\label{sobeuceq}
||u||_{L^6(\rrr)} \leq C_3 ||\nabla u||_{L^2(\rrr)}.
\end{equation}
Moreover, if $||u||_{L^6(\rrr)} = C_3 ||\nabla u||_{L^2(\rrr)}$ and $u \neq 0$ then {$u$ is a solution to \eqref{eucgroundstate}}
\end{lem}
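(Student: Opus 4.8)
The plan is to take $C_3$ to be the sharp (smallest admissible) constant, characterized variationally by
\begin{equation}
C_3^{-2} = S := \inf\Big\{ \|\nabla u\|_{\rlt}^2 : u \in \hdot,\ \|u\|_{\rls} = 1 \Big\},
\end{equation}
so that \eqref{sobeuceq} is precisely the assertion $S>0$ together with the corresponding bound for every $u$. The first step is to record $S>0$: this is the critical Sobolev (Gagliardo--Nirenberg--Sobolev) embedding $\hdot \hookrightarrow \rls$ in dimension three, where $6=2^\ast$ is the critical exponent; I would establish it on $C_c^\infty(\rrr)$ and extend by density, or simply invoke it. With this normalization, any $u\neq 0$ saturating \eqref{sobeuceq} becomes, after dividing by $\|u\|_{\rls}$, a minimizer of $\|\nabla\cdot\|_{\rlt}^2$ on the constraint set $\{\|v\|_{\rls}=1\}$.

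Second, I would derive the Euler--Lagrange equation from the first variation. For $\varphi\in C_c^\infty(\rrr)$ the constrained criticality of $u$ gives, through the Lagrange multiplier rule,
\begin{equation}
\intr \nabla u\cdot\nabla\varphi = \lambda \intr |u|^4 u\,\varphi \qquad \text{for all }\varphi ,
\end{equation}
that is, $-\Delta u = \lambda |u|^4 u$ weakly on $\rrr$, for a multiplier $\lambda\in\RRR$. Taking $\varphi=u$ (legitimate since $u\in\hdot$ and $|u|^4u\in L^{6/5}$) yields $\|\nabla u\|_{\rlt}^2 = \lambda\|u\|_{\rls}^6$, hence $\lambda = \|\nabla u\|_{\rlt}^2/\|u\|_{\rls}^6 > 0$.

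Third, I would remove the multiplier and fix the sign. Because the ratio defining $S$ is invariant under the two-parameter family $u\mapsto a\,u(b\,\cdot)$, I can choose $a,b>0$ so that the rescaled function $W$ satisfies $-\Delta W = |W|^4 W$: under this scaling the multiplier is replaced by $\lambda\,a^{-4}b^2$, which can be set equal to $1$. To pass from $|W|^4W$ to $W^5$ and reach \eqref{eucgroundstate}, I note that $|u|$ is also admissible with $\||u|\|_{\rls}=\|u\|_{\rls}$ and $\|\nabla|u|\|_{\rlt}\le\|\nabla u\|_{\rlt}$; minimality forces equality, so the extremal may be taken nonnegative. Elliptic regularity then upgrades $W$ to a smooth solution, and the strong maximum principle gives $W>0$, whence $|W|^4W=W^5$ as in the footnote and $W$ solves \eqref{eucgroundstate}.

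The main obstacle is the regularity/positivity step: at the critical exponent the nonlinearity $|u|^4u$ lies exactly at the borderline of the Sobolev scale (it belongs to $L^{6/5}$, and a naive bootstrap of $-\Delta u\in L^{6/5}$ gains no integrability), so one must exploit the precise structure through a Brezis--Kato / Moser iteration (or appeal to the explicit Aubin--Talenti profile $W(x)=(1+|x|^2)^{-1/2}$) to obtain smoothness before invoking the maximum principle. The rearrangement step, while standard via P\'olya--Szeg\H{o}, likewise requires care: one must verify that equality in $\|\nabla|u|\|_{\rlt}\le\|\nabla u\|_{\rlt}$ genuinely pins down the sign of the extremal.
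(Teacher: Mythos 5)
Your proposal is correct in substance, but note that the paper does not prove Lemma \ref{sobeuc} at all: it is quoted from Aubin \cite{aubin1976espaces}, so there is no internal proof to compare against. What you give is the standard self-contained route: the variational characterization $C_3^{-2}=\inf\{\|\nabla u\|_{\rlt}^{2}:\|u\|_{\rls}=1\}$, the Lagrange multiplier rule (legitimate here since both functionals are $C^1$ on $\hdot$) yielding $-\Delta u=\lambda|u|^{4}u$ weakly with $\lambda=\|\nabla u\|_{\rlt}^{2}/\|u\|_{\rls}^{6}>0$, and the two-parameter scaling $u\mapsto a\,u(b\,\cdot)$, under which the Sobolev ratio is invariant and the multiplier transforms to $\lambda a^{-4}b^{2}$, to normalize $\lambda=1$. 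Your explicit rescaling is in fact more honest than the lemma's literal statement: an extremizer solves \eqref{eucgroundstate} only after this normalization (e.g.\ $2Q$ saturates \eqref{sobeuceq} but $\Delta(2Q)+(2Q)^{5}=30\,Q^{5}\neq 0$). The paper glosses over this, and in its one application of the equality case (case 3 in the proof of Lemma \ref{nobound}) the additional constraint $\|\nabla_{\hhh}u_{0}\|_{L^{2}(\hhh)}^{2}=\|\nabla Q\|_{\rlt}^{2}=C_3^{-3}$ happens to force the multiplier to equal $1$, so no rescaling is needed there.

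Two corrections to your commentary, neither of which affects the conclusion. First, for real-valued $u$ one has $|u|^{4}u\equiv u^{5}$ identically, so the whole positivity/rearrangement step is unnecessary for the stated claim: once $\lambda$ is normalized, the (a priori possibly sign-changing) extremizer solves \eqref{eucgroundstate} directly, the nonlinearity being odd; likewise regularity is not needed, since a weak $\hdot$ solution already qualifies as ``a solution.'' Second, your equality-case reasoning for the modulus is off: for real $u\in\hdot$ one has $\|\nabla|u|\|_{\rlt}=\|\nabla u\|_{\rlt}$ for \emph{every} $u$ (since $|\nabla|u||=|\nabla u|$ a.e.), so equality there pins down nothing about the sign; moreover this step is taking the modulus, not a symmetric-decreasing rearrangement, so P\'olya--Szeg\H{o} and its delicate (Brothers--Ziemer) equality case are not what is being invoked. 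If one does want positivity of extremizers --- needed to identify them with $Q$, but not for Lemma \ref{sobeuc} --- the correct mechanism is the one you name at the end: $|u|$ is again an extremizer, Brezis--Kato/Moser iteration gives smoothness, the strong maximum principle gives $|u|>0$, and continuity then forces $u$ to have one sign.
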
 
Second by Hebey \cite{hebey}
\begin{lem}
\label{sobhyp}
Let $u \in H^1(\hhh)$ then 
\begin{equation}
\label{sobhypeq}
||u||_{L^6(\hhh)} \leq C_3 ||\nabla_{\hhh} u||_{L^2(\hhh)},
\end{equation}
where $C_3$ is the same constant as in Lemma \ref{sobeuc}. 
\end{lem}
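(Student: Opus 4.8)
The plan is to deduce \eqref{sobhypeq} from the sharp Euclidean inequality (Lemma~\ref{sobeuc}) in two steps: symmetrize to reduce to the radial case, and then, exploiting that $\hhh$ is three dimensional, transport a radial function on $\hhh$ to a radial function on $\rrr$ by a substitution that converts the spectral gap of $-\Delta_{\hhh}$ into Euclidean Dirichlet energy. For the reduction, given $u\in H^{1}(\hhh)$ I would take its symmetric decreasing rearrangement $u^{*}$ about a fixed point: rearrangement preserves the $L^{6}$ mass and, by the P\'olya--Szeg\H{o} inequality on $\hhh$ (which follows from the isoperimetric inequality there), does not increase the Dirichlet energy. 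Thus \[ \|u\|_{L^{6}(\hhh)}=\|u^{*}\|_{L^{6}(\hhh)},\qquad \|\nabla_{\hhh}u^{*}\|_{L^{2}(\hhh)}\le\|\nabla_{\hhh}u\|_{L^{2}(\hhh)}, \] so it is enough to establish \eqref{sobhypeq} for radial $f$.

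For radial $f=f(r)$ I set $v(r)=\sinh(r)\,f(r)$ and define the radial Euclidean function $\phi(x):=\tfrac{\sinh|x|}{|x|}\,f(|x|)$ on $\rrr$. The substitution is engineered so that, after an integration by parts (with no surviving boundary term for $f\in C^{\infty}_{c}$) and the one-dimensional identity $\int_{0}^{\infty}|\phi'|^{2}r^{2}\,dr=\int_{0}^{\infty}|v'|^{2}\,dr$, one obtains \[ \|\nabla\phi\|_{L^{2}(\rrr)}^{2}=\|\nabla_{\hhh}f\|_{L^{2}(\hhh)}^{2}-\|f\|_{L^{2}(\hhh)}^{2}\ \le\ \|\nabla_{\hhh}f\|_{L^{2}(\hhh)}^{2}. \] On the other hand, since $\sinh r\ge r$, the transport can only increase the $L^{6}$ mass: \[ \|\phi\|_{L^{6}(\rrr)}^{6}=4\pi\int_{0}^{\infty}\frac{v^{6}}{r^{4}}\,dr\ \ge\ 4\pi\int_{0}^{\infty}\frac{v^{6}}{\sinh^{4}r}\,dr=\|f\|_{L^{6}(\hhh)}^{6}. \]

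Feeding $\phi$ into Lemma~\ref{sobeuc} and chaining the two displays gives \[ \|f\|_{L^{6}(\hhh)}\le\|\phi\|_{L^{6}(\rrr)}\le C_{3}\|\nabla\phi\|_{L^{2}(\rrr)}\le C_{3}\|\nabla_{\hhh}f\|_{L^{2}(\hhh)}, \] which is \eqref{sobhypeq} with the very same constant $C_{3}$. I would then pass from $C^{\infty}_{c}$ radial functions to all of $H^{1}_{rad}(\hhh)$ by approximating in $H^{1}$ and invoking Fatou's lemma on a pointwise a.e.\ convergent subsequence, which avoids any circular use of the embedding being proved. Together with the symmetrization step this yields \eqref{sobhypeq} for every $u\in H^{1}(\hhh)$.

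I expect the main obstacle to be the energy identity in the second step: one must check that the integration by parts leaves no boundary contribution at $r=0$ or $r=\infty$ and recognize that conjugation by $\sinh r$ strips off precisely the bottom $\tfrac{(3-1)^{2}}{4}=1$ of the $L^{2}$ spectrum of $-\Delta_{\hhh}$, so that what the Euclidean functional assigns to $\phi$ is exactly $\|\nabla_{\hhh}f\|_{L^{2}(\hhh)}^{2}-\|f\|_{L^{2}(\hhh)}^{2}$. The only other nontrivial ingredient is the validity of P\'olya--Szeg\H{o} on $\hhh$. Finally I would remark that the computation in fact proves the stronger \emph{gap} inequality $\|f\|_{L^{6}(\hhh)}^{6}\le C_{3}^{6}\big(\|\nabla_{\hhh}f\|_{L^{2}(\hhh)}^{2}-\|f\|_{L^{2}(\hhh)}^{2}\big)^{3}$, which is the refinement one needs to pin the infimum in Theorem~\ref{thmmain} at $\tfrac12$.
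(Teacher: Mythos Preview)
The paper does not prove Lemma~\ref{sobhyp}; it simply quotes it from Hebey \cite{hebey}. Your proposal, by contrast, supplies a self-contained argument, and the argument is correct. The two key computations check out: with $v=\sinh r\,f(r)$ and $\phi=v/r$, one has (for radial $f\in C_c^\infty$)
\[
4\pi\int_0^\infty|\phi'|^2r^2\,dr=4\pi\int_0^\infty|v'|^2\,dr=\|\nabla_{\hhh}f\|_{L^2(\hhh)}^2-\|f\|_{L^2(\hhh)}^2,
\]
the first equality by the integration by parts you indicate (boundary terms vanish since $v\sim r$ near $0$), the second by expanding $v'=\cosh r\,f+\sinh r\,f'$ and integrating the cross term by parts. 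The $L^6$ comparison $\|\phi\|_{L^6(\rrr)}\ge\|f\|_{L^6(\hhh)}$ is exactly $\sinh r\ge r$. The reduction to radial functions via P\'olya--Szeg\H{o} on $\hhh$ is standard and preserves both the $L^2$ and $L^6$ norms while not increasing the Dirichlet energy, so the gap inequality you note at the end,
\[
\|u\|_{L^6(\hhh)}^2\le C_3^2\big(\|\nabla_{\hhh}u\|_{L^2(\hhh)}^2-\|u\|_{L^2(\hhh)}^2\big),
\]
indeed holds for all $u\in H^1(\hhh)$, not just radial ones. This sharper inequality (essentially the Mancini--Sandeep refinement) is stronger than what the paper actually invokes; the paper only uses \eqref{sobhypeq} to run the energy-trapping argument of Lemma~\ref{energytrapping}, and establishes $\inf J=\tfrac12$ by a compactness/profile-decomposition contradiction rather than by exploiting the gap directly.
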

We emphasize that the constants $C_3$ in \eqref{sobhypeq} and \eqref{sobeuceq} are the same. 

\subsubsection{Variational inequality and energy trapping}

We have the following lemma as in \cite{kenig2006global}
\begin{lem}\label{energytrapping}
Assume 
\begin{equation*}
|| \nabla_{\hhh} u ||_{L^2(\hhh)}^2 < ||\nabla Q||_{L^2(\rrr)}^2 \quad \text{and} \quad E_{\hhh}(u) <(1-\delta_0)E_{\rrr}(Q),
\end{equation*}
where $\delta_0>0$.
Then there exists $\bar{\delta} = \bar{\delta}(\delta_0)>0$ such that 
\begin{equation}
\label{graddelta}
{ ||\nabla_{\hhh} u||_{L^2(\hhh)}^2 < (1- \bar{\delta}) || \nabla Q||_{L^2(\rrr)}^2},
\end{equation}
and
\begin{equation}
\label{energyminus}
{||\nabla_{\hhh} u |_{L^2(\hhh)}^2 - ||u||_{L^6(\hhh)}^6 \geq \bar{\delta} ||\nabla_{\hhh} u||_{L^2(\hhh)}^2 ,}
\end{equation}
and in particular
\begin{equation}
\label{posenergy}
{E_{\hhh}(u) \gtrsim ||\nabla_{\hhh} u||_{L^2(\hhh)}^{2}.}
\end{equation}
\end{lem}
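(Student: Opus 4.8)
The plan is to reduce all three conclusions to the analysis of a single real function of $y := \|\nabla_{\hhh} u\|_{L^2(\hhh)}^2$. By the hyperbolic Sobolev embedding (Lemma \ref{sobhyp}) with the \emph{Euclidean} best constant $C_3$, every $u \in H^1(\hhh)$ satisfies $\|u\|_{L^6(\hhh)}^6 \le C_3^6\, y^3$, and hence
\[
E_{\hhh}(u) = \tfrac12 y - \tfrac16 \|u\|_{L^6(\hhh)}^6 \ge g(y), \qquad g(y) := \tfrac12 y - \tfrac16 C_3^6 y^3 .
\]
First I would record the ground-state identities on $\rrr$. Multiplying $\Delta Q + Q^5 = 0$ by $Q$ and integrating gives $\|\nabla Q\|_{L^2(\rrr)}^2 = \|Q\|_{L^6(\rrr)}^6$; combining this with the saturation $\|Q\|_{L^6(\rrr)} = C_3 \|\nabla Q\|_{L^2(\rrr)}$ from Lemma \ref{sobeuc} yields $A := \|\nabla Q\|_{L^2(\rrr)}^2 = C_3^{-3}$ and $E_{\rrr}(Q) = \tfrac13 A$. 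A direct computation then shows that $g$ is strictly increasing on $[0,A]$, strictly decreasing on $[A,\infty)$, and attains its maximum exactly at $y = A$ with $g(A) = \tfrac13 A = E_{\rrr}(Q)$.

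With this dictionary the first trapping estimate follows from elementary calculus. The hypotheses give $y < A$ and $g(y) \le E_{\hhh}(u) < (1-\delta_0) g(A)$, so by continuity and strict monotonicity of $g$ on $[0,A]$ the value $y_0 := g^{-1}\big((1-\delta_0)g(A)\big) \in (0,A)$ is well defined and $y < y_0$. Setting $\bar{\delta} := 1 - y_0/A \in (0,1)$, which depends only on $\delta_0$ and on the fixed constant $C_3$, gives $y < (1-\bar{\delta}) A$, i.e. \eqref{graddelta}.

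For \eqref{energyminus} I would feed this gap back into the Sobolev bound: since $\|u\|_{L^6(\hhh)}^6 \le C_3^6 y^3$ and $C_3^6 y^2 < C_3^6 (1-\bar{\delta})^2 A^2 = (1-\bar{\delta})^2$, one obtains
\[
\|\nabla_{\hhh} u\|_{L^2(\hhh)}^2 - \|u\|_{L^6(\hhh)}^6 \ge y\big(1 - C_3^6 y^2\big) \ge \big(1-(1-\bar{\delta})^2\big)\, y = \bar{\delta}(2-\bar{\delta})\, y \ge \bar{\delta}\, y ,
\]
using $\bar{\delta} \in (0,1)$. Finally \eqref{posenergy} is immediate from \eqref{energyminus}: writing $E_{\hhh}(u) = \tfrac12 y - \tfrac16 \|u\|_{L^6(\hhh)}^6$ and using $\|u\|_{L^6(\hhh)}^6 \le (1-\bar{\delta}) y \le y$ gives $E_{\hhh}(u) \ge \tfrac13 y$, which is \eqref{posenergy} with implicit constant $\tfrac13$.

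The one genuinely substantive point, rather than an obstacle, is the second step: the argument works precisely because the hyperbolic Sobolev constant coincides with the Euclidean one, so that the comparison function $g$ peaks exactly at the Euclidean ground-state level. Once the identities $A = C_3^{-3}$ and $\max g = E_{\rrr}(Q)$ are in place, the remaining monotonicity and the quantification of $\bar{\delta}(\delta_0)$ are routine; the only care needed is to select a single $\bar{\delta}$ small enough to serve all three conclusions simultaneously, which is automatic since each step only asks $\bar{\delta}$ to be smaller.
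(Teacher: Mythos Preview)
Your proposal is correct and follows essentially the same route as the paper: both reduce to the scalar comparison function $y\mapsto \tfrac12 y-\tfrac16 C_3^6 y^3$ via the hyperbolic Sobolev inequality with the Euclidean sharp constant, use the identities $\|\nabla Q\|_{L^2(\rrr)}^2=C_3^{-3}$ and $E_{\rrr}(Q)=\tfrac13 C_3^{-3}$, and then deduce \eqref{graddelta} from monotonicity, \eqref{energyminus} from the resulting gap, and \eqref{posenergy} from \eqref{energyminus}. Your computation for \eqref{energyminus} via $1-C_3^6 y^2>1-(1-\bar\delta)^2$ is in fact more explicit than the paper's derivative-bound sketch, but the content is the same.
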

\begin{proof}
From the proof of the Lemma 3.4 in \cite{kenig2006global} we have that 
\begin{equation}
|| \nabla Q ||_{L^2( \rrr)}^2 = \frac{1}{C_3^3} \quad \text{and} \quad E(Q) = \left(\frac{1}{2}-\frac{1}{6} \right)|| \nabla Q ||_{L^2( \rrr)}^2 = \frac{1}{3 C_3^3},
\end{equation}
where $C_3$ is the best constant for the Sobolev inequality for $\rrr$. 
The remainder of the proof is almost identical to that of Lemma 3.4 of \cite{kenig2006global}, we quickly review it.
We consider the function $$f_1(y) = \frac{1}{2} y - \frac{C_3^6}{6} y^3, $$ which if we plug in $||\nabla_{\hhh} u||_{L^2(\hhh)}$ by  \eqref{sobhypeq} and \eqref{sobeuceq} {is less than or equal to $E_{\hhh}(u)$. This along with the fact that $f$ is strictly increasing for $0<y<1/C_3^3$ and that $E_{\hhh}(u) < (1-\delta_0) E_{\rrr}(Q)$} gives us \eqref{graddelta}.

We also consider $$g_1(y) = y - C_3^6 y^3, $$ which, if we again plug in $||\nabla_{\hhh} u||_{L^2}$, by  \eqref{sobhypeq} and \eqref{sobeuceq} is less than or equal to $\int_{\hhh} |\nabla_{\hhh} u|^2 - |u|^6$. We note that $g'(y)$ is bounded below by $-2$ for $0<y<1/C_3^3$ which along with \eqref{graddelta} {gives us a lower bound on $g(y)$ on the same interval  in terms of $||\nabla_{\hhh} u||_{L^2(\hhh)}^2$} which shows \eqref{energyminus}. 

{Once we have \eqref{energyminus}, then \eqref{posenergy} follows directly as 
$$ E_{\hhh}(u) = \left( \frac{1}{2} - \frac{1}{6} \right) ||\nabla_{\hhh} u||_{L^2(\hhh)}^2 + \frac{1}{6} \left( ||\nabla_{\hhh} u||_{L^2(\hhh)}^{2} - ||u||_{L^6(\hhh)}^{6}\right) \geq C_{\bar{\delta}} ||\nabla_{\hhh} u||_{L^2(\hhh)}^{2}. $$}

\end{proof}

\subsubsection{Profile decomposition on Euclidean space}

We recall the profile decomposition on Euclidean space:
\begin{lem}\label{concentrationcompactness}
{Let $u_{k}$ be a sequence of bounded radial $\hdot$ functions}, then (up to replacement by a subsequence), one can find
$\{v_{j};\{\lambda_{j,k}\}_{k}\}_{j}$ where $v_{j}$ is a radial $\hdot$ function and $\lambda_{j,k}\in \RRR_{+}$, such that for any $J\geq 1$,
\begin{equation}\label{profile}
u_{k}(x)=\sum_{j=1}^{J}\frac{1}{\lambda_{j,k}^{1/2}}v_{j} \left( \frac{x}{\lambda_{j,k}} \right)+R_{J,k},
\end{equation}
and the following holds:
\begin{itemize}
\item $j\neq j'$ implies $\limk |\ln (\lambda_{j,k}/\lambda_{j,k'})|=\infty$.
\item For any fixed $J$, $\|u_{k}\|_{\hdot}^2=\sum_{j=1}^{J}\|v_{j}\|_{\hdot}^{2}+\|R_{J,k}\|_{\hdot}^{2}+o_{k}(1)$.
\item $\lim_{J\rightarrow\infty}\limsup_{k \rightarrow \infty}\|R_{J,k}\|_{L^{6}(\RRR^{3})}=0$
\end{itemize}
We say $u_{k}$ admits a profile decomposition with $\{v_{j};\{\lambda_{j,k}\}_{k}\}_{j}$, and we call $v_{j}$ a profile.
\end{lem}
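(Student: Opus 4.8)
The plan is to establish \eqref{profile} by the standard iterative profile-extraction scheme of G\'erard and Bahouri--G\'erard, adapted to the energy-critical embedding $\hdot\hookrightarrow\rls$ (Lemma \ref{sobeuc}) and considerably simplified by the radial hypothesis: for radial functions the only symmetry one has to track is the scaling $f\mapsto\lambda^{1/2}f(\lambda\,\cdot)$ preserving $\hdot$, so no translation parameters appear and the profiles are centered purely by a scale $\lambda_{j,k}$, exactly as in the statement. The whole construction is a greedy extraction of concentrating bubbles followed by a diagonal argument in $J$.

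The analytic heart is a single \emph{profile-extraction step}, which I would state and prove first: given a bounded radial sequence $(w_k)$ in $\hdot$ with $\|w_k\|_{\hdot}\le A$ and $\limsupk\|w_k\|_{\rls}=\eps>0$, there exist, after passing to a subsequence, scales $\lambda_k>0$ and a nonzero radial $v\in\hdot$ with $\lambda_k^{1/2}w_k(\lambda_k\,\cdot)\rightharpoonup v$ weakly in $\hdot$, together with a quantitative lower bound $\|v\|_{\hdot}\ge c(\eps,A)>0$. The mechanism is a refined Sobolev inequality of the form $\|u\|_{\rls}\lesssim\|u\|_{\hdot}^{1/3}\,\|u\|_{\dot B^{-1/2}_{\infty,\infty}}^{2/3}$, which is scale-consistent for the $\hdot$-scaling: a uniform lower bound on $\|w_k\|_{\rls}$ and an upper bound on $\|w_k\|_{\hdot}$ force $\|w_k\|_{\dot B^{-1/2}_{\infty,\infty}}$ to stay bounded below, and the frequency realizing that Besov norm selects the concentration scale $\lambda_k$. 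In the radial case one may instead use the pointwise bound $|u(r)|\lesssim r^{-1/2}\|u\|_{\hdot}$ to locate the scale directly; either way the rescaled functions cannot converge weakly to zero, which yields the profile $v$ with the asserted lower bound.

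Granting this step, I build the decomposition by induction. Set $R_{0,k}=u_k$; having produced $v_1,\dots,v_J$ and scales $\lambda_{1,k},\dots,\lambda_{J,k}$ with remainder $R_{J,k}$, apply the extraction step to $R_{J,k}$ to obtain $v_{J+1}$ and $\lambda_{J+1,k}$, and define $R_{J+1,k}=R_{J,k}-\lambda_{J+1,k}^{-1/2}v_{J+1}(\cdot/\lambda_{J+1,k})$; if at some stage the remainder already vanishes in $\rls$ the process terminates. Testing against the rescaled profile and using weak convergence gives at each stage $\|R_{J,k}\|_{\hdot}^2=\|v_{J+1}\|_{\hdot}^2+\|R_{J+1,k}\|_{\hdot}^2+o_k(1)$, which telescopes to the second bullet, and in particular forces $\sum_j\|v_j\|_{\hdot}^2\le\limsupk\|u_k\|_{\hdot}^2<\infty$, so $\|v_j\|_{\hdot}\to0$. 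A diagonal extraction over $J$ then yields a single subsequence for which \eqref{profile} holds for every $J$ simultaneously.

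The scale-orthogonality $\limk|\ln(\lambda_{j,k}/\lambda_{j',k})|=\infty$ for $j\neq j'$ is built into the construction: after subtracting the $j$-th bubble, the sequence $\lambda_{j,k}^{1/2}R_{j,k}(\lambda_{j,k}\,\cdot)$ converges weakly to zero, so were a later scale $\lambda_{j',k}$ to remain comparable to $\lambda_{j,k}$ (bounded $\ln$-ratio), the corresponding weak limit $v_{j'}$ would have to vanish, contradicting $v_{j'}\neq0$. Finally, the $\rls$-smallness of the remainder comes from reading the extraction lemma in reverse: the quantitative lower bound shows $\limsupk\|R_{J,k}\|_{\rls}$ is controlled by the $\hdot$-size of the next extractable profile, namely $\|v_{J+1}\|_{\hdot}\to0$, whence $\lim_{J\to\infty}\limsupk\|R_{J,k}\|_{\rls}=0$. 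The main obstacle is precisely the profile-extraction step, and within it the quantitative lower bound $\|v\|_{\hdot}\ge c(\eps,A)$: once that harmonic-analytic gain is secured, the Pythagorean splitting, the divergence of the $\ln$-ratios, and the $L^6$ decay are all soft consequences of weak convergence and the finiteness of $\sum_j\|v_j\|_{\hdot}^2$. As this is by now classical, I expect to cite the refined Sobolev inequality and Keraani's quantitative profile decomposition and to organize the induction, rather than reprove the Besov estimate from scratch.
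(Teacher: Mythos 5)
The paper offers no proof of this lemma at all, deferring entirely to \cite{gerard1998description} and \cite{keraani2001defect}; your outline reconstructs exactly the standard argument of those references (refined Sobolev inequality $\|u\|_{L^6(\RRR^3)}\lesssim \|u\|_{\dot H^1}^{1/3}\|u\|_{\dot B^{-1/2}_{\infty,\infty}}^{2/3}$, greedy scale extraction with the radial hypothesis eliminating translation parameters, Pythagorean expansion via weak convergence, and a diagonal argument), so it is correct and follows essentially the same route the paper intends. The details you leave to Keraani's quantitative extraction lemma are precisely what the paper's citations cover, so nothing further is required.
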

See \cite{gerard1998description}, \cite{keraani2001defect} for a proof.
\subsection{A nonexistence result for an elliptic PDE}
\begin{lem}\label{nonextencepre}
{There is no positive  solution to
\begin{equation}
\Delta_{\hhh} u+u^{5}+\lambda u=0,\quad \lambda\in \RRR,
\end{equation}
in $H^{1}(\hhh)$.}
\end{lem}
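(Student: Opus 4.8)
The plan is to reduce to an ordinary differential equation on the half line and then play a Pohozaev-type identity against the decay forced by $H^1$ membership. First I would argue that it suffices to treat radial solutions: the equation is invariant under the (transitive) isometry group of $\hhh$, and a positive $H^1$ solution decays at infinity, so the moving plane method shows it is radial about some point, which we may take to be the origin (alternatively one simply quotes \cite{mancini2008semilinear}; the purpose here is a short self-contained argument). By elliptic regularity such a $u$ is smooth, and writing the radial Laplacian as $\Delta_{\hhh} u = u'' + 2\coth r\, u'$ I would introduce the Emden--Fowler substitution $w(r)=u(r)\sinh r$. A direct computation gives $\Delta_{\hhh} u = \frac{1}{\sinh r}(w''-w)$, so the equation becomes
\begin{equation}
w'' + (\lambda-1)w + \frac{w^5}{\sinh^4 r}=0,\qquad r\in(0,\infty),
\end{equation}
with $w(0)=0$, $w>0$ on $(0,\infty)$, and $w\in H^1(0,\infty)$ (indeed $\|u\|_{L^2(\hhh)}^2=4\pi\int_0^\infty w^2$ and $\|\nabla_{\hhh} u\|_{L^2(\hhh)}^2 = 4\pi\int_0^\infty((w')^2+w^2)$).

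The heart of the argument is to extract two integral identities and subtract them. Multiplying the ODE by $w$ and integrating by parts (all boundary terms vanishing because $w(0)=0$ and $w$ decays) gives the Nehari relation $\int (w')^2 = (\lambda-1)\int w^2 + \int \sinh^{-4}r\, w^6$. Multiplying instead by $rw'$ and integrating by parts gives the Pohozaev relation $\int (w')^2 + (\lambda-1)\int w^2 + \tfrac13\int(g'r+g)w^6=0$, where $g(r)=\sinh^{-4}r$. Subtracting the first from the second and using the simplification $\tfrac13 g'r+\tfrac43 g = \tfrac43 \sinh^{-4}r\,(1-r\coth r)$ yields
\begin{equation}
2(\lambda-1)\int_0^\infty w^2\,dr = \frac{4}{3}\int_0^\infty \frac{r\coth r - 1}{\sinh^4 r}\,w^6\,dr .
\end{equation}
Since $r\coth r>1$ for every $r>0$, the right-hand side is strictly positive, forcing $\lambda>1$.

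On the other hand, the $H^1$ decay of $w$ forces $\lambda\le 1$. Indeed, if $\lambda>1$ then the equation gives $w''=(1-\lambda)w-\sinh^{-4}r\,w^5<0$ wherever $w>0$, so $w$ is strictly concave on $(0,\infty)$; then $w'$ is decreasing and must remain positive (otherwise $w$ would eventually become negative, contradicting $w>0$), so $w$ is increasing and cannot tend to $0$, contradicting $w\in L^2$. This is precisely the statement that no positive bound state survives once $\lambda$ exceeds the bottom $1=(n-1)^2/4$ of the spectrum of $-\Delta_{\hhh}$ on $\hhh$. Combining the two conclusions, $\lambda>1$ and $\lambda\le 1$, gives a contradiction, so no positive $H^1$ solution exists.

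The two integral identities are routine to compute; the steps I expect to require the most care are (i) the reduction to the radial case, where the moving plane method on $\hhh$ must be invoked (or the statement borrowed from \cite{mancini2008semilinear}), and (ii) the justification that all boundary terms vanish at $r=0$ and $r=\infty$, which rests on the behavior $w\sim u(0)\,r$ near the origin and on the exponential decay of $w$ at infinity coming from elliptic regularity together with $u\in H^1(\hhh)$. The decisive structural input is the strict inequality $r\coth r>1$: it is exactly the hyperbolic weight's deviation from the flat case that breaks the critical scaling and drives the nonexistence.
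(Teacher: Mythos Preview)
The paper does not actually prove this lemma: it simply refers the reader to \cite{mancini2008semilinear}. Your proposal therefore goes well beyond what the paper does, supplying a genuine argument in place of a citation.

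Your approach is in fact very close to the one in \cite{mancini2008semilinear}: the Emden--Fowler substitution $w=u\sinh r$ reducing to a one-dimensional problem, followed by a Pohozaev/Nehari pairing whose key output is the identity
\[
2(\lambda-1)\int_0^\infty w^2\,dr=\frac{4}{3}\int_0^\infty\frac{r\coth r-1}{\sinh^4 r}\,w^6\,dr,
\]
together with the strict inequality $r\coth r>1$ on $(0,\infty)$. Your computations are correct (I checked the transformed equation, both integral identities, and the algebra in the subtraction), and the concavity argument ruling out $\lambda\ge 1$ is sound once one notes that it applies equally when $\lambda=1$, since then $w''=-\sinh^{-4}r\,w^5<0$ still.

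One organizational remark: the cleanest logical order is to first eliminate $\lambda\ge 1$ by the concavity argument (which needs no integral identity), and only then, in the remaining case $\lambda<1$, invoke the Pohozaev identity. This is because the vanishing of the boundary terms $r(w')^2$ and $rw^2$ at infinity is immediate once $\lambda<1$ (the linearized equation $w''\approx(1-\lambda)w$ forces exponential decay), whereas for $\lambda\ge 1$ those terms are not obviously controlled. Your point (ii) flags exactly this issue; reordering the two halves of the argument resolves it without further work. The reduction to radial solutions via moving planes on $\hhh$ is indeed the one step that genuinely needs an outside input, and citing \cite{mancini2008semilinear} for that is appropriate.
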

See \cite{mancini2008semilinear} for a proof.
We point out, by standard techniques from elliptic equations, one can upgrade the above lemma to
\begin{lem}\label{nonextence}
{There is no non-negative solution to
\begin{equation}
\Delta u+u^{5}+\lambda u=0, \quad \lambda\in \RRR,
\end{equation} 
in $H^{1}(\hhh) \backslash \{0\}$.}
\end{lem}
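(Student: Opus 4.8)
The plan is to deduce Lemma \ref{nonextence} from Lemma \ref{nonextencepre} by upgrading ``non-negative and nontrivial'' to ``strictly positive.'' Suppose $u \in H^{1}(\hhh) \setminus \{0\}$ solves $\Delta_{\hhh} u + u^{5} + \lambda u = 0$ with $u \geq 0$ (we read $\Delta$ here as the hyperbolic Laplacian $\Delta_{\hhh}$). If we can show that $u > 0$ everywhere on $\hhh$, then $u$ is a positive $H^{1}(\hhh)$ solution of the same equation, which is exactly what Lemma \ref{nonextencepre} forbids; this contradiction proves the lemma. Thus the entire content is to promote a nontrivial non-negative solution to a pointwise positive one, which is a combination of elliptic regularity and the strong maximum principle.

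First I would establish regularity. Writing the equation as $-\Delta_{\hhh} u = (u^{4} + \lambda) u$ and recalling that $u \in H^{1}(\hhh) \hookrightarrow L^{6}(\hhh)$ by Lemma \ref{sobhyp}, the coefficient $u^{4}$ lies in $L^{3/2}(\hhh)$. Since the question is local and $\hhh$ is a smooth Riemannian manifold, this is precisely the setting of a Brezis--Kato type bootstrap: treating $V := u^{4} + \lambda$ as a potential with $V \in L^{3/2}_{loc}$, one upgrades $u$ to $L^{p}_{loc}$ for every $p < \infty$, then to $W^{2,p}_{loc}$, and finally to $C^{2}$ (indeed smooth, since the nonlinearity is smooth). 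In particular $u$ becomes continuous, hence locally bounded. I expect this to be the main obstacle, because the nonlinearity $u^{5}$ is $H^{1}$-critical in dimension three, so a naive Sobolev bootstrap stalls exactly at the borderline exponent $n/2 = 3/2$; one has to run the critical iteration (Moser/Brezis--Kato, splitting the potential into a high part of small $L^{3/2}$ norm and a bounded low part) to cross it.

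Once $u$ is known to be continuous, the maximum principle step is routine. Decompose $V = u^{4} + \lambda = V^{+} - V^{-}$, so that the equation can be rewritten as $-\Delta_{\hhh} u + V^{-} u = V^{+} u \geq 0$, where the inequality uses $u \geq 0$. Since $u$ is locally bounded and $\lambda$ is fixed, $V^{-} = \max(-(u^{4}+\lambda),0)$ is non-negative and locally bounded. Hence $u$ is a non-negative supersolution of the Schr\"odinger operator $-\Delta_{\hhh} + V^{-}$ with $V^{-} \geq 0$ and $V^{-} \in L^{\infty}_{loc}$. By the strong maximum principle (Hopf) on the connected manifold $\hhh$, either $u \equiv 0$ or $u > 0$ everywhere. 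Since $u \not\equiv 0$ by hypothesis, we conclude $u > 0$, which produces the positive $H^{1}(\hhh)$ solution excluded by Lemma \ref{nonextencepre} and completes the proof.
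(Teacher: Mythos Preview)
Your proposal is correct and follows a genuinely different route from the paper. Both arguments start with elliptic regularity (the paper cites Trudinger, you invoke the Brezis--Kato iteration, which is indeed the right way to cross the critical exponent) and both reduce to Lemma \ref{nonextencepre} after showing that a nontrivial non-negative solution is in fact everywhere positive. The difference is in how that positivity is proved. The paper exploits that, in its application, the solution is radial: writing the equation as the second-order ODE
\[
u_{rr} + 2\coth r\, u_r = -u^5 + \lambda u,
\]
if $u(r_0)=0$ then also $u_r(r_0)=0$ (since $r_0$ is a minimum of a non-negative function), and ODE uniqueness forces $u\equiv 0$; the singular point $r_0=0$ is handled by observing smoothness gives $u(0)=u_r(0)=u_{rr}(0)=0$ as well. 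Your argument instead rewrites the equation as $-\Delta_{\hhh} u + V^{-}u = V^{+}u \ge 0$ and applies the strong maximum principle on the connected manifold $\hhh$.

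What each buys: the paper's ODE argument is more elementary once radiality is in hand, but as written it tacitly requires the radial assumption (which suffices for the applications in the paper but is not stated in the lemma). Your maximum-principle route dispenses with radial symmetry altogether and proves the lemma as literally stated, at the cost of invoking a slightly heavier PDE tool.
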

We will prove Lemma \ref{nonextence} in Appendix \ref{aaaaa}.
\section{Proof of main result}
\subsection{No minimizer can exist}
{We first show no element in $\overline{\Omega} \backslash \{0\}$ can minimize $J$.}
\subsubsection{No minimizer can exist in the interior of $\Omega$}
We show 
\begin{lem}\label{noninter}
There does not exist $u_{0}\in \Omega$ such that $J(u_{0})=\inf_{u\in \Omega }J(u)$.
\end{lem}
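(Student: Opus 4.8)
The plan is to exploit the fact that $\Omega$ is an \emph{open} subset of $H^1_{rad}(\hhh)$, being cut out by the two strict inequalities $E_{\hhh}(f)<E_{\rrr}(Q)$ and $\|\nabla_{\hhh}f\|_{L^2(\hhh)}<\|\nabla Q\|_{L^2(\rrr)}$ together with $f\neq 0$. Consequently, if some $u_0\in\Omega$ realized the infimum, both constraints would be inactive, and $u_0$ would be a genuine \emph{free} critical point of $J$ on $H^1_{rad}(\hhh)$: for every $h\in H^1_{rad}(\hhh)$ and every small $t$ the perturbation $u_0+th$ still lies in $\Omega$, so $\frac{d}{dt}\big|_{t=0}J(u_0+th)=0$. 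I would then show that the resulting Euler--Lagrange equation is exactly the elliptic equation forbidden by Lemma \ref{nonextence}, which gives the contradiction.

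Before computing the variation I would reduce to the case $u_0\ge 0$. Replacing $u_0$ by $|u_0|$ leaves $\|u_0\|_{L^2(\hhh)}^2$ and $\|u_0\|_{L^6(\hhh)}^6$ unchanged and does not increase $\|\nabla_{\hhh}u_0\|_{L^2(\hhh)}^2$ (diamagnetic inequality, $|\nabla_{\hhh}|u_0||\le|\nabla_{\hhh}u_0|$ a.e.), hence $E_{\hhh}(|u_0|)\le E_{\hhh}(u_0)$ and $J(|u_0|)\le J(u_0)$. Thus $|u_0|$ still satisfies both strict constraints and is nonzero, so $|u_0|\in\Omega$ and is again a minimizer; we may therefore assume $u_0\ge 0$.

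Next I would carry out the first-variation computation. Since $u_0\neq 0$ we have $\|u_0\|_{L^2(\hhh)}^2>0$, and $J$ is differentiable at $u_0$ with
\[
\frac{d}{dt}\Big|_{t=0}J(u_0+th)=\frac{1}{\|u_0\|_{L^2(\hhh)}^2}\left(\inth \nabla_{\hhh}u_0\cdot\nabla_{\hhh}h-\inth u_0^{5}\,h\right)-\frac{2E_{\hhh}(u_0)}{\|u_0\|_{L^2(\hhh)}^4}\inth u_0\, h.
\]
Setting this equal to zero for all $h\in H^1_{rad}(\hhh)$ yields the weak form of
\[
\Delta_{\hhh}u_0+u_0^{5}+\lambda u_0=0,\qquad \lambda:=\frac{2E_{\hhh}(u_0)}{\|u_0\|_{L^2(\hhh)}^2}=2J(u_0)\in\RRR .
\]
By standard elliptic regularity $u_0$ is then a non-negative, nonzero $H^1(\hhh)$ solution of this equation, directly contradicting Lemma \ref{nonextence}. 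Hence no minimizer exists in $\Omega$.

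The main obstacle is not the algebra of the Euler--Lagrange equation but the justification that the minimizer really is an interior critical point solving the forbidden PDE in the required sense. One must check that $J$ is Gateaux differentiable at $u_0$ with the displayed formula (the $L^6$ term being controlled by the Sobolev embedding of Lemma \ref{sobhyp}), that admissibility $u_0+th\in\Omega$ genuinely holds in every direction because $\Omega$ is open, and that the weak solution is regular and non-negative enough to fall under Lemma \ref{nonextence}. Restricting to radial test functions is harmless: since $J$ is invariant under the isometries of $\hhh$, by symmetric criticality a radial critical point solves the full elliptic equation, whose non-negative solutions are excluded by Lemma \ref{nonextence}.
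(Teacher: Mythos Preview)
Your argument is correct and follows essentially the same route as the paper: reduce to $u_0\ge 0$ via $J(|u_0|)\le J(u_0)$, derive the Euler--Lagrange equation $\Delta_{\hhh}u_0+u_0^{5}+\lambda u_0=0$, and invoke Lemma \ref{nonextence}. The paper's proof is terser---it does not spell out the openness of $\Omega$, the explicit value $\lambda=2J(u_0)$, or the symmetric criticality point---so your version is simply a more fully justified rendition of the same idea.
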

\begin{proof}
We first point out that we can assume $u_{0}$ is non-negative, since $J(|u_{0}|) \leq J(u_{0})$. Then,  we compute the Euler-Lagrange equation for the minimizer and we get for some $\lambda_{0}\in \RRR$
 \begin{equation}
 \Delta u_{0}+u_{0}^{5}+\lambda_{0}u_{0}=0.
 \end{equation}
 which contradicts Lemma \ref{nonextence}.
\end{proof}
\subsubsection{No minimizer can exist on  the boundary of $\Omega$ }
We show
\begin{lem}\label{nobound}
There does not exist $u_{0}\in \partial \Omega$ with $u_0 \neq 0$ such that $J(u)=\inf_{u\in \Omega}{J(u)}$
\end{lem}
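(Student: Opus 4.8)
The plan is to argue by contradiction: suppose some $u_0 \in \partial\Omega$ with $u_0 \neq 0$ satisfies $J(u_0) = \inf_{u\in\Omega}J(u)$, and show that in every configuration $u_0$ is forced to solve an elliptic equation forbidden by Lemma \ref{nonextence}. First I would reduce to $u_0 \geq 0$: since $\|\,|u_0|\,\|_{L^6(\hhh)} = \|u_0\|_{L^6(\hhh)}$ and, by the diamagnetic inequality, $\|\nabla_{\hhh}|u_0|\|_{L^2(\hhh)} \leq \|\nabla_{\hhh} u_0\|_{L^2(\hhh)}$, the function $|u_0|$ still lies in $\overline{\Omega}\setminus\{0\}$ and satisfies $J(|u_0|) \leq J(u_0)$. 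If $|u_0|$ fell into the interior $\Omega$ it would be an interior minimizer, contradicting Lemma \ref{noninter}, so I may assume $u_0 \geq 0$ and $u_0 \in \partial\Omega$. Because $u_0 \in \partial\Omega$, at least one of the two defining constraints is saturated, and this splits the argument into two cases.

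In the first case, $\|\nabla_{\hhh} u_0\|_{L^2(\hhh)}^2 = \|\nabla Q\|_{L^2(\rrr)}^2$. Using $\|\nabla Q\|_{L^2(\rrr)}^2 = 1/C_3^3$ from Lemma \ref{energytrapping} together with the hyperbolic Sobolev inequality of Lemma \ref{sobhyp}, I would bound $E_{\hhh}(u_0) \geq \frac12\|\nabla_{\hhh}u_0\|_{L^2(\hhh)}^2 - \frac{C_3^6}{6}\|\nabla_{\hhh}u_0\|_{L^2(\hhh)}^6 = E_{\rrr}(Q)$. Since membership in $\overline{\Omega}$ forces $E_{\hhh}(u_0) \leq E_{\rrr}(Q)$, equality must hold throughout, so $u_0$ saturates \eqref{sobhypeq}. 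An extremizer of the hyperbolic Sobolev inequality solves its Euler--Lagrange equation $\Delta_{\hhh}u_0 + \mu u_0^5 = 0$ for some $\mu > 0$, and after the rescaling $u_0 \mapsto \mu^{1/4}u_0$ this gives a nonnegative nontrivial $H^1(\hhh)$ solution of $\Delta_{\hhh}v + v^5 = 0$, contradicting Lemma \ref{nonextence}.

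In the second case, $E_{\hhh}(u_0) = E_{\rrr}(Q)$ while the gradient constraint stays strict, $\|\nabla_{\hhh} u_0\|_{L^2(\hhh)}^2 < \|\nabla Q\|_{L^2(\rrr)}^2$. Here I would run a Lagrange-multiplier/perturbation argument: both $J$ and $E_{\hhh}$ are $C^1$ near $u_0$, with Fréchet derivative $E_{\hhh}'(u_0) = -\Delta_{\hhh}u_0 - u_0^5$. If $J'(u_0)$ and $E_{\hhh}'(u_0)$ were linearly independent, I could choose a radial direction $\phi$ decreasing both $J$ and $E_{\hhh}$ to first order; then for small $\eps>0$ the function $u_0 + \eps\phi$ would lie in $\Omega$ (the energy drops strictly below $E_{\rrr}(Q)$, the gradient constraint is slack, and it is nonzero) yet satisfy $J(u_0+\eps\phi) < J(u_0) = \inf_\Omega J$, a contradiction. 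Hence $J'(u_0)$ and $E_{\hhh}'(u_0)$ are dependent; since $J'(u_0) = \|u_0\|_{L^2(\hhh)}^{-2}E_{\hhh}'(u_0) - 2\|u_0\|_{L^2(\hhh)}^{-4}E_{\hhh}(u_0)\,u_0$ and $E_{\hhh}(u_0) = E_{\rrr}(Q) > 0$, this forces $\Delta_{\hhh}u_0 + u_0^5 + \lambda_0 u_0 = 0$ for some $\lambda_0 \in \RRR$, with $u_0 \geq 0$ nontrivial, once more contradicting Lemma \ref{nonextence}.

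The routine parts are the energy/Sobolev estimates and the first-order perturbation bookkeeping. I expect the main obstacle to be the rigorous conversion of the saturated constraints into the forbidden PDE. In the first case this means justifying that equality in the hyperbolic Sobolev inequality genuinely yields a smooth positive solution of the Euler--Lagrange equation, which requires elliptic regularity and positivity (as packaged into Lemma \ref{nonextence}). In the second case it means verifying the constraint-qualification/Lagrange step, in particular that $E_{\hhh}'(u_0)\neq 0$ (otherwise $\Delta_{\hhh}u_0+u_0^5=0$ already contradicts Lemma \ref{nonextence}) and that the degenerate multiplier value is excluded precisely because $E_{\hhh}(u_0) = E_{\rrr}(Q) > 0$.
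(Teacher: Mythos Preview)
Your proof is correct and follows the same overall strategy as the paper: split by which boundary constraint is saturated, derive an Euler--Lagrange equation, and invoke Lemma \ref{nonextence}. One organizational difference is worth noting. The paper treats three cases (gradient saturated only, energy saturated only, both saturated) and runs a separate Lagrange-multiplier computation for the first. You instead observe that if $\|\nabla_{\hhh}u_0\|_{L^2(\hhh)}^2 = \|\nabla Q\|_{L^2(\rrr)}^2 = 1/C_3^3$, then by Lemma \ref{sobhyp} one automatically has $E_{\hhh}(u_0) \geq \tfrac{1}{2C_3^3} - \tfrac{1}{6C_3^3} = E_{\rrr}(Q)$, so the paper's Case 1 is in fact vacuous and collapses into its Case 3. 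This is a genuine simplification: it removes one Lagrange-multiplier computation and the somewhat delicate sign discussion around equation \eqref{m1}. Your explicit justification that replacing $u_0$ by $|u_0|$ either lands in $\partial\Omega$ or yields an interior minimizer (contradicting Lemma \ref{noninter}) is also more careful than the paper's one-line reduction. The remaining case (energy saturated, gradient slack) is handled identically in both arguments via a Lagrange multiplier for the constraint $E_{\hhh}(u) = E_{\rrr}(Q)$.
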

\begin{proof}
Again we assume $u_{0}$ non-negative. There are three cases:
\begin{enumerate}
\item $\|\nabla_{\hhh} u_{0}\|_{L^{2}(\hhh)}=\|\nabla Q\|_{L^{2}(\RRR^{3})}, E_{\hhh}(u_{0})<E_{\RRR^{3}}(Q).$
\item $\|\nabla_{\hhh} u_{0}\|_{L^{2}(\hhh)}<\|\nabla Q\|_{L^{2}}(\RRR^{3}), E_{\hhh}(u_{0})=E_{\RRR^{3}}(Q).$
\item $\|\nabla_{\hhh} u_{0}\|_{L^{2}(\hhh)}=\|\nabla Q\|_{L^{2}}(\RRR^{3}), E_{\hhh}(u_{0})=E_{\RRR^{3}}(Q).$
\end{enumerate}
If case 1 holds, then by Lagrangian multiplier one derive for some $\lambda_{0}\in \RRR$
\begin{equation}\label{m1}
(1+M_{0} \lambda_{0})\Delta _{\hhh}u_{0}+u_{0}^{5}+2J(u_{0})u_{0}=0.
\end{equation}
where $M_{0}:=||u_0||_{L^2(\hhh)}^{2}$.

If $1+M_{0}\lambda_{0}\leq 0$, then multiply both sides of \eqref{m1} by $u_0$ and integrate, one gets $u_{0}\equiv0$.
If $1+M_{0}\lambda_{0} >0$, one can consider the equation for $\tilde{u}_{0}:=(\frac{1}{1+M_{0}\lambda_{0}})u_0$ and then argue as the proof of Lemma \ref{noninter} to show $\tilde{u}_{0}\equiv 0$.

If case 2 holds, by again using Lagrangian multiplier, one can derive an equation for $u_{0}$ and show that $u_{0}\equiv 0$ as in the proof of Lemma \ref{noninter}.

Case 3 cannot hold. Indeed, if case 3 holds, then $u_{0}$ is the optimizer for the sharp Sobolev $\|u\|_{L^{6}(\hhh)}\leq C^{*}\| \nabla_{\hhh} u\|_{L^2 (\hhh)}$, then writing down its Euler-Lagrange equation, we get
\begin{equation}
{\Delta u_{0}+ u_{0}^{5}=0.}
\end{equation}
Again, by Lemma \ref{nonextence}, $u_{0}=0$. Thus, case 3 cannot hold. 
\end{proof}

 \subsection{Main Proposition and proof for Theorem \ref{thmmain}}
Theorem \ref{thmmain} mainly relies on the following proposition
\begin{prop}\label{lemmamain}
Assume $\inf_{f\in \Omega}J(f)<\frac{1}{2}$, i.e. there exists a minimizing sequence {$u_{k} \in \Omega$}, such that 
$\limk J(u_{k})=\frac{1}{2}-\signot\equiv \inf_{f\in \Omega}J(f)$, and $\signot>0$. Then there exists a function $v\in \bar{\Omega}/\{0\}$, such that $J(v)=\frac{1}{2}-\signot$.
\end{prop}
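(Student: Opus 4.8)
The plan is to run a concentration--compactness argument: extract a weak $H^1(\hhh)$ limit $v$ of the minimizing sequence $u_k$, show it is nontrivial, and upgrade the convergence enough to pass to the limit in $J$. The one quantitative input that drives everything is the strict deficit $\signot>0$. First I would record the \emph{non-degeneracy} of the minimizing sequence. Since $u_k\in\Omega$, the gradients $\|\nabla_{\hhh}u_k\|_{L^2(\hhh)}^2$ are bounded above by $\|\nabla Q\|_{L^2(\rrr)}^2=C_3^{-3}$, and the Poincar\'e (spectral-gap) inequality on $\hhh$, namely $\|\nabla_{\hhh}f\|_{L^2(\hhh)}^2\ge\|f\|_{L^2(\hhh)}^2$, bounds $M_k:=\|u_k\|_{L^2(\hhh)}^2$ above as well. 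Writing $J(u_k)=\tfrac12\,\|\nabla_{\hhh}u_k\|^2/M_k-\tfrac16\,\|u_k\|_{L^6(\hhh)}^6/M_k$ and using the spectral gap gives $\tfrac16\|u_k\|_{L^6(\hhh)}^6/M_k\ge\tfrac12-J(u_k)\to\signot$, so the $L^6$ norm is bounded below relative to the mass. Feeding this into the Sobolev inequality (Lemma \ref{sobhyp}) together with the strict gradient constraint, one checks that $M_k$ and $\|\nabla_{\hhh}u_k\|^2$ are in fact bounded away from $0$: both a vanishing sequence ($M_k\to0$) and a degenerating one are incompatible with $\signot>0$.

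Next I would pass to the weak limit. After a subsequence $u_k\rightharpoonup v$ in $H^1(\hhh)$ and, via local Rellich compactness on an exhaustion by geodesic balls, $u_k\to v$ a.e.; set $w_k:=u_k-v\rightharpoonup0$. Weak convergence splits the quadratic quantities, $\|\nabla_{\hhh}u_k\|^2=\|\nabla_{\hhh}v\|^2+\|\nabla_{\hhh}w_k\|^2+o(1)$ and $M_k=\|v\|_{L^2(\hhh)}^2+\|w_k\|_{L^2(\hhh)}^2+o(1)$, while the Brezis--Lieb lemma splits the sextic term, $\|u_k\|_{L^6(\hhh)}^6=\|v\|_{L^6(\hhh)}^6+\|w_k\|_{L^6(\hhh)}^6+o(1)$; hence $E_{\hhh}(u_k)=E_{\hhh}(v)+E_{\hhh}(w_k)+o(1)$. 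The crux is then a single inequality on the weakly null defect, which I will call the defect inequality: $E_{\hhh}(w_k)\ge\tfrac12\|w_k\|_{L^2(\hhh)}^2-o(1)$.

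Granting the defect inequality, the endgame is short. Dividing the split energy by the split mass gives $J(u_k)\ge(E_{\hhh}(v)+\tfrac12\|w_k\|_{L^2(\hhh)}^2-o(1))/(\|v\|_{L^2(\hhh)}^2+\|w_k\|_{L^2(\hhh)}^2+o(1))$. If $v=0$ this forces $\liminf J(u_k)\ge\tfrac12$, since $\|w_k\|_{L^2(\hhh)}^2\to\lim M_k>0$ by the first paragraph, contradicting $J(u_k)\to\tfrac12-\signot<\tfrac12$; hence $v\ne0$. With $v\ne0$, the same inequality in the limit yields $E_{\hhh}(v)\le(\tfrac12-\signot)\|v\|_{L^2(\hhh)}^2$, i.e. $J(v)\le\tfrac12-\signot$. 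Moreover weak lower semicontinuity of the gradient gives $\|\nabla_{\hhh}v\|^2\le\|\nabla Q\|^2$, and $E_{\hhh}(v)=E_{\hhh}(u_k)-E_{\hhh}(w_k)+o(1)\le E_{\hhh}(u_k)+o(1)<E_{\rrr}(Q)+o(1)$ (using $E_{\hhh}(w_k)\ge-o(1)$ from the defect inequality), so $E_{\hhh}(v)\le E_{\rrr}(Q)$ and thus $v\in\overline\Omega\setminus\{0\}$. Finally $\inf_{\overline\Omega\setminus\{0\}}J=\inf_\Omega J=\tfrac12-\signot$, because a boundary point is approximated from inside $\Omega$ by the rescalings $(1-\eps)v$ (the strict gradient constraint makes $t\mapsto E_{\hhh}(tv)$ increasing at $t=1$, so these lie in $\Omega$); hence $J(v)\ge\tfrac12-\signot$, and therefore $J(v)=\tfrac12-\signot$, as required.

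The main obstacle is the defect inequality, and this is exactly where the scale-invariant Euclidean structure must be reconciled with the non-scale-invariant hyperbolic functionals. I would prove it by applying the radial Euclidean profile decomposition (Lemma \ref{concentrationcompactness}) to $w_k$, viewed as radial $\dot H^1(\rrr)$ functions (legitimate since $r^2\,dr$ is dominated by $\sinh^2 r\,dr$, so $\|\nabla w_k\|_{L^2(\rrr)}\le\|\nabla_{\hhh}w_k\|_{L^2(\hhh)}$ is bounded). Because $w_k\rightharpoonup0$, no profile lives at a fixed scale, so every scale satisfies $\lambda_{j,k}\to0$ or $\lambda_{j,k}\to\infty$. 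A direct computation of the rescaled bubble $\lambda^{-1/2}v(\cdot/\lambda)$ in hyperbolic coordinates shows its squared hyperbolic gradient equals $\lambda^{-2}\int|v'(s)|^2\sinh^2(\lambda s)\,ds$ up to the angular factor, which blows up as $\lambda\to\infty$; the boundedness of $\|\nabla_{\hhh}w_k\|^2$ therefore excludes the large-scale profiles. The surviving small-scale profiles concentrate near the origin, where $\sinh r\sim r$, so each one sees Euclidean geometry: its hyperbolic energy converges to a Euclidean energy $E_{\rrr}\ge0$ (by the Euclidean energy trapping of Lemma \ref{energytrapping}, available because the profiles inherit the subcritical bound $\|\nabla v_j\|_{L^2(\rrr)}\le\|\nabla Q\|$), while its hyperbolic $L^2$-mass is asymptotically negligible. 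For the remainder, whose Euclidean $L^6$ norm vanishes, one checks that the hyperbolic $L^6$ norm vanishes as well (any nontrivial bubble at large $r$ would already have been recorded as a large-scale profile), after which the spectral gap gives $E_{\hhh}(\text{remainder})\ge\tfrac12\|\text{remainder}\|_{L^2(\hhh)}^2-o(1)$. Summing the nonnegative, asymptotically massless small-scale contributions with the dispersed remainder yields the defect inequality. Making the scale-dependent comparison of hyperbolic and Euclidean $L^2$ and $L^6$ norms rigorous, and controlling the associated cross terms, is the genuinely delicate part; the remainder of the argument is bookkeeping.
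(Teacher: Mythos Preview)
Your outline is correct in spirit and uses the same ingredients as the paper (spectral gap, energy trapping, radial Hardy decay, Euclidean profile decomposition), but the organization is genuinely different. The paper first proves, by ad hoc physical-space decoupling arguments, that the minimizing sequence loses no $H^1(\hhh)$ mass at infinity (Lemma~\ref{lemnonescaping}) and no mass by concentration at the origin (Lemma~\ref{lemnonconcentration}); only then does it run the profile decomposition on $u_k$ itself, showing that types (a) and (c) are empty and that the single type (b) profile is the desired $v$. Your version extracts the weak limit $v$ first and packages everything into the single ``defect inequality'' $E_{\hhh}(w_k)\ge\tfrac12\|w_k\|_{L^2(\hhh)}^2-o(1)$ for $w_k=u_k-v$, which is a cleaner unifying principle; the paper's two decoupling lemmas are then hidden inside your profile analysis of $w_k$. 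What the paper buys is that each step is a self-contained computation with explicit constants; what your approach buys is conceptual economy, since the defect inequality plus Brezis--Lieb immediately forces $v\ne0$ and $J(v)\le\tfrac12-\signot$.

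One step in your argument is not right as written. You exclude large-scale profiles by saying the single bubble $\lambda^{-1/2}v_j(\cdot/\lambda)$ has hyperbolic gradient $\lambda^{-2}\int|v_j'(s)|^2\sinh^2(\lambda s)\,ds\to\infty$, and that this contradicts the $H^1(\hhh)$ bound on $w_k$. But the decomposition $w_k=\sum_j\phi_{j,k}+R_{J,k}$ is only asymptotically orthogonal in $\dot H^1(\rrr)$; there is no hyperbolic orthogonality, the individual bubbles need not even lie in $H^1(\hhh)$, and nothing prevents cancellation in $\|\nabla_{\hhh}w_k\|^2$ between a blowing-up bubble and the rest. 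The paper's argument (Lemma~\ref{aaa}) is the correct one here: the uniform Hardy bound $|w_k(r)|\lesssim e^{-3r/4}$ for $r\ge10$ (Lemma~\ref{lemhardyinequality}) forces $\sup_k\|w_k\|_{L^6(|x|>R)}\to0$ as $R\to\infty$, while a nontrivial large-scale profile would deposit a fixed amount of $L^6(\rrr)$ mass in the annulus $|x|\sim\lambda_{1,k}\to\infty$. This is an easy fix, and once you use Hardy here, the same device handles your later claim that the remainder's hyperbolic $L^6$ norm vanishes. Your treatment of small-scale profiles (nonnegative asymptotic energy, negligible $L^2(\hhh)$ mass) is correct but, as you note, requires care in decoupling the hyperbolic quantities; this is exactly the content the paper isolates in Lemma~\ref{lemnonconcentration}. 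Finally, your explicit verification that $\inf_{\overline\Omega\setminus\{0\}}J=\inf_\Omega J$ via $(1-\eps)v\in\Omega$ is a point the paper leaves implicit, and your justification (that $t\mapsto E_{\hhh}(tv)$ has positive derivative at $t=1$ unless $v$ were a Sobolev optimizer on $\hhh$, which does not exist) is correct.
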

Let us assume the above Proposition temporarily and  prove Theorem \ref{thmmain}.
\begin{proof}
Step 1: $\inf_{f\in\Omega} J(f)\leq \frac{1}{2}$.
Indeed, by studying the spectrum $-\Delta_{\hhh}$, we know the smallest eigenvalue of $-\Delta_{\hhh}=1$, i.e.
\begin{description}
\item[(a)] $$\forall f \neq 0, \, \frac{\|\nabla_{\hhh} f\|_{\hlt}}{\|f\|_{\hlt}}\geq 1.$$
\item[(b)] $$\exists f_{k}, \|f_{k}\|_{\hlt}=1, \limk \frac{\|\nabla_{\hhh} f_{k}\|_{\hlt}}{\|f_{k}\|_{\hlt}}=1.$$
\end{description}
Note by picking $f_{k}$ as in (b), it is not hard to see $\limk J(\frac{f_{k}}{k})=\frac{1}{2}$ and $\frac{f_{k}}{k}\in \Omega$ for large $k$.

Step 2: $J(f)\geq 0$. This follows directly from Lemma \ref{energytrapping}.

Step 3: Assume $\inf_{f\in \Omega}J(f)<\frac{1}{2}$, we apply Proposition \ref{lemmamain} to get $v$ such that $J(v)=\inf_{f\in \Omega}J(f)$, however, this contradicts Lemma \ref{noninter} and Lemma \ref{nobound}.
\end{proof}
All that is left is to prove is Proposition \ref{lemmamain}.
We give two reminders before we start.
\begin{enumerate}
\item 
Since we are in hyperbolic space, $\dot{H}^{1}(\hhh)$ is exactly $H^{1}(\hhh)$.
\item {A lot of limits are taken in this note, most arguments work only up to replacement by a subsequence, one may consider all the following arguments in which we take limits to only be valid up to replacement by a subsequence.}
 \end{enumerate}

\subsection{Outline for the proof of Proposition \ref{lemmamain} }
Throughout this subsection ,we  assume 
\begin{assumption}\label{assume}
$\{u_{k}\}$ is a minimizing sequence in $\Omega$, such that 
\begin{equation}\label{eqbase}
J(u_{k})\rightarrow \frac{1}{2}-\signot \equiv \inf_{f\in \Omega}J(f), \quad \signot>0.
\end{equation}
\end{assumption}
The  strategy for the proof is quite standard in variational arguments, the basic idea is that any minimizer cannot be split into two decoupled parts.

We first show $u_{k}$ cannot converge to zero. To be precise, we have
\begin{lem}\label{lemnontrivial}
Under Assumption \ref{assume}, there is $m_{0}>0$ such that $\liminfk \|u_{k}\|_{\hlt}\geq m_{0}>0, \\
\liminfk \|u_{k}\|_{\hnorm}\geq m_{0}>0$.
\end{lem}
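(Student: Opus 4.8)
The plan is to exploit the defining identity $E_{\hhh}(u_k) = J(u_k)\|u_k\|_{\hlt}^2$ together with the spectral gap of $-\Delta_{\hhh}$ (recalled in Step 1 of the proof of Theorem \ref{thmmain}, namely $\|\nabla_{\hhh} f\|_{\hlt}^2 \geq \|f\|_{\hlt}^2$) and the Sobolev embedding of Lemma \ref{sobhyp}. I would treat the two lower bounds separately: the gradient/$H^1$ bound directly, and the $L^2$ bound by contradiction using the energy-trapping Lemma \ref{energytrapping}.

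First, for the $H^1$ bound, abbreviate $a_k := \|\nabla_{\hhh} u_k\|_{\hlt}^2$, $b_k := \|u_k\|_{\hls}^6$ and $m_k := \|u_k\|_{\hlt}^2$, so that the defining identity reads $\tfrac12 a_k - \tfrac16 b_k = J(u_k) m_k$. The spectral gap gives $m_k \leq a_k$, and since $J(u_k) \geq 0$ (Step 2 of the proof of Theorem \ref{thmmain}) this yields $\tfrac12 a_k - \tfrac16 b_k \leq J(u_k) a_k$, i.e. $\bigl(\tfrac12 - J(u_k)\bigr) a_k \leq \tfrac16 b_k$. Bounding $b_k \leq C_3^6 a_k^3$ via Lemma \ref{sobhyp} and dividing by $a_k > 0$ (legitimate since $u_k \neq 0$), I get $\tfrac12 - J(u_k) \leq \tfrac{C_3^6}{6} a_k^2$. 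Because $J(u_k) \to \tfrac12 - \signot$ with $\signot > 0$, for large $k$ one has $\tfrac12 - J(u_k) \geq \signot/2$, hence $a_k^2 \geq 3\signot/C_3^6$. This furnishes $\liminfk \|\nabla_{\hhh} u_k\|_{\hlt}^2 > 0$, and since $\|u_k\|_{\hnorm}^2 \geq a_k$ the $H^1$ bound follows at once.

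Second, for the $L^2$ bound, suppose toward a contradiction that $\liminfk m_k = 0$, so $m_k \to 0$ along a subsequence. Since $J(u_k)$ is bounded, $E_{\hhh}(u_k) = J(u_k) m_k \to 0$; in particular $E_{\hhh}(u_k) < \tfrac12 E_{\rrr}(Q)$ for $k$ large. Thus Lemma \ref{energytrapping} applies with $\delta_0 = 1/2$, and its conclusion \eqref{posenergy} gives $E_{\hhh}(u_k) \gtrsim \|\nabla_{\hhh} u_k\|_{\hlt}^2 = a_k$. But the first part shows $a_k$ is bounded below by a fixed positive constant, contradicting $E_{\hhh}(u_k) \to 0$. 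Hence $\liminfk \|u_k\|_{\hlt}^2 > 0$, and taking $m_0$ to be the smaller of the two positive lower bounds completes the argument.

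The one point needing care is the applicability of Lemma \ref{energytrapping} in the second step: that lemma requires a strict gap $E_{\hhh}(u_k) < (1-\delta_0)E_{\rrr}(Q)$ with a \emph{fixed} $\delta_0$, which mere membership in $\Omega$ does not supply, since $E_{\hhh}(u_k)$ could a priori approach $E_{\rrr}(Q)$. This is exactly where the two estimates interlock: the contradiction hypothesis $m_k \to 0$ forces $E_{\hhh}(u_k) \to 0$, comfortably below $E_{\rrr}(Q)$, so the fixed gap $\delta_0 = 1/2$ becomes available and the trapping inequality can be invoked against the uniform gradient lower bound from the first step.
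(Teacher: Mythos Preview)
Your proof is correct. The ingredients are the same as the paper's---the spectral gap $\|\nabla_{\hhh} f\|_{\hlt}^2 \geq \|f\|_{\hlt}^2$, Sobolev (Lemma~\ref{sobhyp}), and energy trapping (Lemma~\ref{energytrapping})---but the organization is reversed. The paper first proves the $L^2$ lower bound by contradiction: $\|u_k\|_{\hlt}\to 0$ forces $E_{\hhh}(u_k)\to 0$, hence $\|\nabla_{\hhh} u_k\|_{\hlt}\to 0$ by energy trapping, and then Lemma~\ref{lemmass} (small $H^1$ norm $\Rightarrow J\geq \tfrac12-\sigma$) yields the contradiction; the $H^1$ bound is then immediate from the $L^2$ bound. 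You instead establish the gradient lower bound \emph{directly} (no contradiction), via the clean chain $(\tfrac12 - J(u_k))a_k \leq \tfrac16 b_k \leq \tfrac{C_3^6}{6}a_k^3$, and only afterward run the contradiction for the $L^2$ bound, using the already-secured gradient bound to clash with $E_{\hhh}(u_k)\gtrsim a_k$. Your first step is effectively an inline, quantitative proof of the contrapositive of Lemma~\ref{lemmass}, so your argument is marginally more self-contained; the paper's version is shorter because it outsources that computation to the auxiliary lemma. Your closing remark about the need for a \emph{fixed} $\delta_0$ in Lemma~\ref{energytrapping} is well taken and applies equally to the paper's invocation of that lemma.
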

See Subsection \ref{sectionnontrivial} for the proof.

Then we show no mass can escape to infinity, we have
\begin{lem}\label{lemnonescaping}
Under Assumption \ref{assume}, let $\chi$ be some fixed compactly supported bump function such that  $$\chi(x)=\begin{cases}
1, |x|\leq \frac{1}{2}\\
0, |x|\geq 1
\end{cases},$$ then for any $\epsilon>0$, there exists $R$, such that 
\begin{equation}\label{eqnonescaping}
\limsupk \left\|\left(1-\chi\left(\frac{x}{R}\right)\right)u_{k}\right\|_{\hnorm}<\epsilon.
\end{equation}
\end{lem}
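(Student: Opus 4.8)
The plan is to prove the non-escaping statement \eqref{eqnonescaping} by contradiction: I would suppose that mass escapes to spatial infinity and show that this lets us split $u_k$ into two asymptotically decoupled pieces, one concentrated near the origin and one carried off to infinity, in a way that strictly lowers the functional $J$ below the infimum $\frac{1}{2}-\signot$, or else produces a competitor in $\Omega$ with strictly smaller $J$, contradicting Assumption \ref{assume}. Concretely, I would fix the bump $\chi$ as in the statement and, for a large radius $R$, decompose $u_k = \chi(x/R)u_k + (1-\chi(x/R))u_k =: a_k + b_k$. The whole argument rests on the \emph{geometry of hyperbolic space}: because the volume element grows like $\sinh^2 r$, a radial profile pushed out to large $r$ behaves more and more like a function living on a region where the metric is effectively Euclidean-at-large-scales, and the Sobolev constant $C_3$ is \emph{exactly} the Euclidean one (Lemma \ref{sobhyp}). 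This is the mechanism that forces any escaping piece to be penalized.

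First I would record the elementary consequences of the cutoff: using that $\chi$ and $1-\chi$ are smooth with gradients supported in the annulus $\{R/2 \le |x| \le R\}$, I would establish the approximate orthogonality/splitting of the relevant norms,
\begin{equation}
\|\nabla_{\hhh} u_k\|_{L^2(\hhh)}^2 = \|\nabla_{\hhh} a_k\|_{L^2(\hhh)}^2 + \|\nabla_{\hhh} b_k\|_{L^2(\hhh)}^2 + o_R(1) + o_k(1),
\end{equation}
and likewise for the $L^2$ and $L^6$ norms, where the error terms are controlled by the mass sitting in the annulus, which is small for large $R$ up to a subsequence. The point of the contradiction hypothesis is that if \eqref{eqnonescaping} fails, then along a subsequence $b_k$ carries a fixed amount of $H^1$-mass bounded below, uniformly in $R$. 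By Lemma \ref{lemnontrivial} the inner piece $a_k$ also carries a fixed amount of mass. Thus we genuinely have two nontrivial decoupled pieces.

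Next I would exploit energy trapping, Lemma \ref{energytrapping}: since $u_k \in \Omega$ the hypotheses \eqref{graddelta}--\eqref{posenergy} apply, so $E_{\hhh}(u_k)\gtrsim \|\nabla_{\hhh}u_k\|_{L^2(\hhh)}^2>0$ and the gradient stays strictly below $\|\nabla Q\|_{L^2(\rrr)}$ with room to spare; this keeps both $a_k$ and $b_k$ inside the regime where $J$ is well-behaved and in particular both pieces individually satisfy the two constraints defining $\Omega$ for $R$ large. The strategy then is the standard ``splitting lowers the quotient'' computation: writing $J$ as a ratio of the (subadditive, after trapping) energy to the $L^2$ mass and using the near-additivity above, I would argue that
\begin{equation}
J(u_k) \ge \min\{J(a_k),\, J(b_k)\} - o_R(1) - o_k(1),
\end{equation}
so that one of the two pieces is itself an (almost) minimizing sequence in $\Omega$. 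The escaping piece $b_k$, however, is supported where $\sinh^2 r$ is large; here I would use the sharp Sobolev inequality on $\hhh$ together with the exponential volume growth to show that $b_k$ cannot saturate the quotient down to $\frac{1}{2}-\signot$ — roughly, the spectral lower bound $\|\nabla_{\hhh}f\|_{\hlt}\ge\|f\|_{\hlt}$ from Step 1 of the Theorem \ref{thmmain} proof forces $J(b_k)$ toward $\frac12$ in the small-amplitude regime that escaping necessarily produces, strictly above the infimum. This contradicts $b_k$ being minimizing, and hence \eqref{eqnonescaping} must hold.

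The main obstacle I anticipate is making the decoupling rigorous in the hyperbolic setting, where one does not have the clean dilation symmetry used in the Euclidean profile decomposition (Lemma \ref{concentrationcompactness}); in particular the cross terms $\langle \nabla_{\hhh} a_k, \nabla_{\hhh} b_k\rangle$ and the $L^6$ cross terms must be shown to vanish, which requires controlling the mass in the transition annulus and using that $\chi$ and $1-\chi$ have disjoint ``cores.'' The delicate quantitative point is to convert ``a definite amount of mass escapes to $r=\infty$'' into ``the quotient $J$ of that escaping piece is bounded strictly above $\frac12-\signot$,'' which is exactly where the equality of Sobolev constants and the volume growth $\sinh^2 r$ of $\hhh$ must be used in tandem rather than treated as a perturbation of the Euclidean case.
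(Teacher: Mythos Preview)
Your overall strategy---split into a near piece and a far piece, show the far piece has $J\ge\tfrac12$ because the $L^6$ contribution dies, and conclude the near piece beats the infimum---is exactly the paper's. But there is a genuine gap in the decoupling step, and you have already flagged it without resolving it.

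With a single fixed cutoff $u_k=a_k+b_k=\chi(\cdot/R)u_k+(1-\chi(\cdot/R))u_k$, the cross terms live on the annulus $\{R/2\le|x|\le R\}$, and nothing in the contradiction hypothesis forces the $H^1$ mass on that annulus to be $o_R(1)$ or $o_k(1)$. The negation of \eqref{eqnonescaping} only says that for every $R$ a fixed amount of mass sits \emph{beyond} $R$; it says nothing about any particular shell. The paper handles this by first extracting, from the failure of \eqref{eqnonescaping}, a sequence $R_k\to\infty$ with $\|(1-\chi(\cdot/R_k))u_k\|_{\hnorm}\ge 2\epsilon_1$, and separately (Lemma~\ref{axlemma}) a fixed $R_0$ with $\|\chi(\cdot/R_0)u_k\|_{\hnorm}\ge 2\epsilon_0$. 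Now the interval $[R_0,R_k]$ is long, the total $H^1$ mass is bounded, and a \emph{pigeonhole} argument (Lemma~\ref{decouplemass}) finds a dyadic shell inside it carrying $H^1$ mass $\le\epsilon_2$. Cutting there gives a three-piece split $u_k=f_{k,1}+f_{k,2}+r_k$ with $f_{k,1},f_{k,2}$ of \emph{disjoint support} and $\|r_k\|_{\hnorm}\le\epsilon_2$, which is what makes \eqref{decopuleenergyphysical}--\eqref{decopulekineticphysical} honest. Your two-piece cutoff cannot produce this without the pigeonhole step.

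A second point: the reason the far piece satisfies $J(f_{k,2})\ge\tfrac12$ is not that it is ``small amplitude''---by construction $\|f_{k,2}\|_{\hnorm}\ge\epsilon_1$ stays bounded below. The mechanism is the radial pointwise decay $|u(r)|\lesssim e^{-3r/4}$ (Lemma~\ref{lemhardyinequality}), which forces $\|f_{k,2}\|_{\hls}\to 0$ since $f_{k,2}$ is supported where $r\to\infty$. With the $L^6$ term gone, $E_{\hhh}(f_{k,2})=\tfrac12\|\nabla_{\hhh}f_{k,2}\|_{\hlt}^2+o_k(1)$ and the spectral gap gives $J(f_{k,2})\ge\tfrac12$. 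From there the weighted-average argument you sketch is the paper's Case~3.
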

See Subsection \ref{sectionnonescaping} for the proof.

Then we show no mass concentrates near the origin, we have
\begin{lem}\label{lemnonconcentration}
Under Assumption \ref{assume}, let $\chi$ be some fixed compactly supported bump function, $$\chi(x)=\begin{cases}
1, |x|\leq \frac{1}{2} \\
0, |x|\geq 1
\end{cases},$$ then for any $\epsilon>0$, there exists $R$, such that 
\begin{equation}\label{nomassislocal}
\limk \|\chi(Rx)u_{k}\|_{\hnorm}<\epsilon.
\end{equation}
\end{lem}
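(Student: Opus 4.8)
The plan is to argue by contradiction through a single-bubble extraction: I will show that persistence of $H^1$ mass near the origin would let me peel off a nontrivial bubble concentrating at the origin, and that deleting it produces a competitor in $\Omega$ with strictly smaller $J$, contradicting Assumption \ref{assume}. Suppose the conclusion fails, so that for some $\epsilon_0>0$ one has $\limsupk\|\chi(Rx)u_k\|_{\hnorm}\ge\epsilon_0$ for every $R$ (after passing to a subsequence). Since $\{u_k\}$ is bounded in $H^1(\hhh)=\dot H^1(\hhh)$ and, near the origin, in $\hdot$, I apply the Euclidean profile decomposition (Lemma \ref{concentrationcompactness}). I claim the persistent near-origin mass forces a profile $v_{j_0}$ with $\lambda_{j_0,k}\to 0$ and $\|\nabla v_{j_0}\|_{\rlt}\gtrsim\epsilon_0$: a profile with scale bounded away from $0$ contributes $\int_{|x|\le 1/(R\lambda_{j,k})}|\nabla v_j|^2\,dx$ to the $\dot H^1$ mass in $B_{1/R}$, which tends to $0$ as $R\to\infty$ because $v_j\in\hdot$, and the remainder is small in $\dot H^1$ for $J$ large by gradient orthogonality; only a scale $\lambda_{j_0,k}\to0$ can retain mass in every $B_{1/R}$. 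Set $b_k:=\lambda_{j_0,k}^{-1/2}v_{j_0}(\cdot/\lambda_{j_0,k})$, regarded as a radial function.

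The bubble carries definite energy but vanishing $L^2$ mass. From the Pythagorean identity in Lemma \ref{concentrationcompactness}, $\|\nabla v_{j_0}\|_{\rlt}\le\liminfk\|\nabla_{\hhh}u_k\|_{\hlt}<\|\nabla Q\|_{\rlt}$; since $\|\nabla Q\|_{\rlt}^2=1/C_3^3$ (see the proof of Lemma \ref{energytrapping}), Lemma \ref{sobeuc} yields $E_{\rrr}(v_{j_0})\ge\tfrac13\|\nabla v_{j_0}\|_{\rlt}^2>0$. Because $\lambda_{j_0,k}\to0$, $b_k$ is concentrated, up to negligible tails, in a ball $B_{\rho_k}$ with $\rho_k\to0$, where $\sinh^2 r=r^2(1+O(r^2))$; hence its hyperbolic gradient and $L^6$ norms match the Euclidean ones up to $o(1)$, giving $E_{\hhh}(b_k)=E_{\rrr}(v_{j_0})+o(1)>0$. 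Meanwhile, by Hölder and the uniform bound in $\hls$, the $L^2(\hhh)$ mass of $u_k$ on $B_{\rho_k}$ is $\lesssim|B_{\rho_k}|^{1/3}\|u_k\|_{\hls}\lesssim\rho_k\to0$, so $\|b_k\|_{\hlt}\to0$.

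I now remove the bubble. Put $\tilde u_k:=u_k-b_k$. The $\dot H^1$-orthogonality of the decomposition and a Brezis--Lieb argument for the $L^6$ term give $\|\nabla_{\hhh}\tilde u_k\|_{\hlt}^2=\|\nabla_{\hhh}u_k\|_{\hlt}^2-\|\nabla v_{j_0}\|_{\rlt}^2+o(1)$ and $E_{\hhh}(\tilde u_k)=E_{\hhh}(u_k)-E_{\rrr}(v_{j_0})+o(1)$, while $\|\tilde u_k\|_{\hlt}^2=\|u_k\|_{\hlt}^2+o(1)$ by the previous step. Hence $\tilde u_k$ still satisfies both strict constraints defining $\Omega$ (its gradient and energy only decreased) and, by Lemma \ref{lemnontrivial}, is nontrivial for large $k$. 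Since $E_{\rrr}(v_{j_0})>0$ is a fixed constant and $\sup_k\|u_k\|_{\hlt}<\infty$,
$$J(\tilde u_k)=\frac{E_{\hhh}(u_k)-E_{\rrr}(v_{j_0})+o(1)}{\|u_k\|_{\hlt}^2+o(1)}\le J(u_k)-c+o(1)$$
for some $c>0$; thus $J(\tilde u_k)<\tfrac12-\signot=\inf_{f\in\Omega}J(f)$ for $k$ large, contradicting the definition of the infimum. This proves the lemma.

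The main obstacle is the interface between the Euclidean profile decomposition and the intrinsically hyperbolic functional $J$: one must verify that near-origin $H^1(\hhh)$ mass is genuinely carried by small-scale Euclidean profiles, and that the decoupling of the gradient, $L^6$, and $L^2$ norms persists when these are computed with the hyperbolic volume element and covariant gradient. Both hold precisely because the bubble concentrates at scale $\to0$, where the hyperbolic and Euclidean structures agree to leading order; quantifying these $o(1)$ errors, and checking that $b_k$ defines a legitimate element of $H^1_{rad}(\hhh)$, is the technical heart of the argument.
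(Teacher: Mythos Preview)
Your argument has a genuine gap at the extraction step. You assert that ``the remainder is small in $\dot H^1$ for $J$ large by gradient orthogonality,'' but the profile decomposition (Lemma~\ref{concentrationcompactness}) only gives $\|u_k\|_{\hdot}^2=\sum_{j\le J}\|v_j\|_{\hdot}^2+\|R_{J,k}\|_{\hdot}^2+o_k(1)$: the remainder is \emph{bounded} in $\dot H^1$, not small. What vanishes as $J\to\infty$ is $\|R_{J,k}\|_{L^6}$, nothing more. Consequently, persistent $\dot H^1$ mass in shrinking balls need not be carried by any profile at all; it can sit entirely in the remainder. (Concretely, a sequence with $\|\nabla u_k\|_{L^2(B_{1/R})}\gtrsim\epsilon_0$ for every $R$ but $\|u_k\|_{L^6(B_{1/R})}\to 0$ has no small-scale bubble to extract.) Your step~1 therefore does not go through, and without a concrete $v_{j_0}$ the rest of the argument has nothing to subtract.

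There is a second, related difficulty you flag but do not resolve: even if such a profile existed, $v_{j_0}$ is only in $\dot H^1(\rrr)\cap L^6(\rrr)$, so $b_k=\lambda_{j_0,k}^{-1/2}v_{j_0}(\cdot/\lambda_{j_0,k})$ need not lie in $L^2(\hhh)$ (the hyperbolic volume grows like $e^{2r}$), and then $\tilde u_k=u_k-b_k$ is not a legal competitor in $\Omega$. The paper avoids both issues by never invoking the profile decomposition here: it uses a purely physical-space splitting (Lemma~\ref{decopulemassversion2}) to write $u_k=f_{k,1}+f_{k,2}+r_k$ with $f_{k,2}$ supported in a shrinking ball. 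Whatever $f_{k,2}$ is---profile or not---energy trapping gives $E_{\hhh}(f_{k,2})\gtrsim\|\nabla_{\hhh}f_{k,2}\|_{\hlt}^2>0$ while H\"older forces $\|f_{k,2}\|_{\hlt}\to 0$, and one concludes $J(f_{k,1})<\tfrac12-\sigma_0$. If you want to salvage your route, you would effectively need to cut off near the origin anyway, which collapses the argument to the paper's.
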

Finally, we use profile decomposition on Euclidean space to extract some non-trivial element minimizing our functional $J$, this is done in Subsection \ref{sectionfinalstory}.

\subsection{Proof of Lemma \ref{lemnontrivial}}\label{sectionnontrivial}
We  point out that once the first inequality is shown,
the second inequality follows from  Lemma \ref{lemmass}.
Now let us show the first inequality. We will prove by contradiction.
Without loss of generality (up to replacement by a sub-sequence), we assume all the $\liminf$, $\limsup$ are in fact $\lim$.
If the first inequality does not hold, one has $\limk \|u_{k}\|_{\hlt}=0$. Since $J(u_{k})$ is bounded, this implies that $\ek \xrightarrow{k} 0$. Thus, by energy trapping,  Lemma \ref{energytrapping}, we have $\|\nabla_{\hhh} u_{k}\|_{\hlt}\rightarrow 0$, which of course is equivalent to $\|u_{k}\|_{\hnorm}\rightarrow 0$. This implies that $\liminf_{k\rightarrow}J(u_{k})\geq \frac{1}{2}$ by Lemma \ref{lemmass},  a contradiction to $\limk J(u_{k})=\frac{1}{2}-\signot\equiv \inf_{f\in \Omega}J(f)$. 

\subsection{Proof of Lemma \ref{lemnonescaping}}\label{sectionnonescaping}
We need the following lemma
\begin{lem}\label{axlemma}
Under Assumption \ref{assume}, there exists a $R_{0}\gg 1, \epsilon_{0}\ll 1$, such that 
$$\liminfk \left\|\chi\left(\frac{x}{R_{0}}\right)u_{k} \right\|_{\hnorm}\geq  2\epsilon_{0}.$$
\end{lem}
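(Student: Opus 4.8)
The plan is to argue by contradiction, extracting from the failure of the lemma a subsequence whose $L^{6}$ mass is negligible, which then forces $\liminf_k J(u_k)\geq \tfrac12$ and contradicts Assumption \ref{assume} (there $J(u_k)\to\frac12-\signot$ with $\signot>0$). The mechanism is purely geometric: the hyperbolic volume element weights the $L^{6}$ density against $L^{2}$ by a factor decaying like $\sinh^{-4}r$, so any profile living far from the origin is essentially invisible to the focusing part of the energy, and for such a profile the spectral gap $\|\nabla_{\hhh}u\|_{\hlt}^2\geq\|u\|_{\hlt}^2$ drives $J$ up toward $\tfrac12$.

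To make this quantitative I would first record the radial substitution $v(r):=\sinh(r)\,u(r)$, which identifies radial $\hnorm$ with $H^1(0,\infty)$; after an integration by parts whose boundary terms vanish for radial $\hnorm$ functions (at $r=0$ because $v(0)=0$, at $\infty$ by decay), one gets
\[
\|u\|_{\hlt}^2 = 4\pi\,\|v\|_{L^2(0,\infty)}^2,\qquad \|\nabla_{\hhh}u\|_{\hlt}^2 = 4\pi\bigl(\|v'\|_{L^2(0,\infty)}^2+\|v\|_{L^2(0,\infty)}^2\bigr),
\]
\[
\|u\|_{\hls}^6 = 4\pi\int_0^\infty \frac{v(r)^6}{\sinh^4 r}\,dr.
\]
The first two identities recover the spectral gap (fact (a) in the proof of Theorem \ref{thmmain}). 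The third is the crucial point: combining it with the one dimensional estimate $\|v\|_{L^\infty}^2\leq\|v\|_{H^1(0,\infty)}^2$, any $u$ supported in $\{r\geq\rho\}$ satisfies
\[
\|u\|_{\hls}^6 \leq \frac{4\pi}{\sinh^4\rho}\,\|v\|_{L^\infty}^4\,\|v\|_{L^2(0,\infty)}^2 \lesssim \frac{1}{\sinh^4\rho}\,\|\nabla_{\hhh}u\|_{\hlt}^6 .
\]
Since each $u_k\in\Omega$ obeys $\|\nabla_{\hhh}u_k\|_{\hlt}<\|\nabla Q\|_{L^2(\rrr)}$, this far field bound is uniform in $k$ and tends to $0$ as $\rho\to\infty$.

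With these tools I would close as follows. Suppose the lemma fails; then for every $R$ one has $\liminf_k\|\chi(x/R)u_k\|_{\hnorm}=0$. Fix a small $\epsilon>0$ (to be chosen in terms of $m_0$ and $\signot$), pick $\rho$ so large that the far field bound is below $\epsilon$ uniformly in $k$, and split $u_k=\chi(x/2\rho)u_k+(1-\chi(x/2\rho))u_k$, so that the far part is supported in $\{|x|\geq\rho\}$. Applying the failure hypothesis with $R=2\rho$ and passing to a subsequence, the near part has $\hnorm$ norm tending to $0$, hence negligible $L^6$ norm by Lemma \ref{sobhyp}, while the far part has $L^6$ norm controlled by $\epsilon$; thus $\|u_k\|_{\hls}^6\lesssim\epsilon$ eventually along this subsequence. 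Using the spectral gap and the lower bound $\|u_k\|_{\hlt}\geq m_0>0$ from Lemma \ref{lemnontrivial},
\[
J(u_k)=\frac{\tfrac12\|\nabla_{\hhh}u_k\|_{\hlt}^2-\tfrac16\|u_k\|_{\hls}^6}{\|u_k\|_{\hlt}^2}\geq \frac12-\frac{1}{6}\,\frac{\|u_k\|_{\hls}^6}{\|u_k\|_{\hlt}^2}.
\]
Choosing $\epsilon$ small enough that the last ratio is below $3\signot$ gives $J(u_k)>\frac12-\frac{\signot}{2}$ for large $k$ along the subsequence, contradicting $J(u_k)\to\frac12-\signot$.

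The main obstacle, and essentially the only delicate bookkeeping, is the substitution step: one must check that the boundary terms in the integration by parts truly vanish for the relevant $\hnorm$ functions (trivial for the cutoff pieces, which are supported away from $r=0$), and that the $\sinh^{-4}r$ suppression estimate survives multiplication by the spatial cutoff $\chi$. Neither is serious, since the cutoff alters the $\hnorm$ norm only by a bounded factor and preserves the support control, but this is where one must track constants carefully.
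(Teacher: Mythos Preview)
Your proof is correct and follows the same contradiction strategy as the paper: both show that if no inner $H^{1}$ mass persists then $\|u_{k}\|_{\hls}\to 0$, whence the spectral gap forces $J(u_{k})\geq\tfrac12$, contradicting Assumption~\ref{assume}. The only difference is in how the far-field $L^{6}$ smallness is obtained---the paper cites the pointwise Hardy-type decay of Lemma~\ref{lemhardyinequality}, whereas you derive it directly via the substitution $v=\sinh(r)\,u$ and the resulting $\sinh^{-4}r$ weight; these are two packagings of the same radial decay, and your version has the pleasant side effect of re-deriving the spectral identity $\|\nabla_{\hhh}u\|_{\hlt}^{2}=\|u\|_{\hlt}^{2}+4\pi\|v'\|_{L^{2}}^{2}$ along the way.
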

\begin{proof}[Proof of Lemma \ref{axlemma}]
{We will show this by contradiction}. Assume Lemma \ref{axlemma} does not hold, then up to replacement by a subsequence, one may assume 
$$\left\|\chi\left(\frac{x}{k}\right)u_{k}\right\|_{\hnorm}\leq \frac{1}{k}.$$ 
By Sobolev embedding, we derive  that 
$$\limk \left\|\chi\left(\frac{x}{k}\right)u_{k}\right\|_{\hls}=0. $$ 
By Hardy inequality, Lemma \ref{hardy}, simply using pointwise control, we derive that 
$$\limk \left\|\left(1-\chi\left(\frac{x}{k}\right)\right)u_{k}\right\|_{\hls}=0.$$
Then since $\liminfk \|u_{k}\|_{\hlt}>0$ (Lemma \ref{lemnontrivial}), we have that (recall the smallest eigenvalue of $-\Delta_{\hhh}$ is $1$)
\begin{equation}
\limk J(u_{k})=\limk \frac{\frac{1}{2}\|\nabla_{\hhh} u_{k}\|_{\hlt}^{2}}{\|u_{k}\|_{\hlt}^{2}}\geq \frac{1}{2},
\end{equation}
a contradiction to \eqref{eqbase}.
\end{proof}
Now we will prove Lemma \ref{lemnonescaping} by contradiction.
Assume \eqref{lemnonescaping} does not hold, then there exists a sequence of $\{R_{k}\}_{k}$ and $\epsilon_{1}>0$, such that (up to replacement by a subsequence)
\begin{equation}
\limk \left\|\left(1-\chi\left(\frac{x}{R_{k}}\right)\right)u_{k}\right\|_{\hnorm}>2\epsilon_{1}, \quad \limk R_{k}=\infty.
\end{equation}
We can apply Lemma \ref{axlemma} to obtain $R_{0}\gg 1, \epsilon_{0}\ll 1$, such that 
$$\liminfk \left\|\chi\left(\frac{x}{R_{0}}\right)u_{k} \right\|_{\hnorm}\geq  2\epsilon_{0}.$$
Then by Lemma \ref{decouplemass}, we can find a decomposition such that
\begin{itemize}
\item $ u_{k}=f_{k,1}+f_{k_2}+r_{k}$.
\item $u_{k}=f_{k,1}$ for $|x|\leq 2R_{0}$, $u_{k}=f_{k,2}$ for $|x|\geq \frac{R_{k}}{2}$.
\item $f_{k,1}, f_{k,2}$ do not have common support for large k.
\item  $\limsupk \|r_{k}\|\leq \epsilon_{2}$. Here $\epsilon_{2}\ll \min\{\epsilon_{0},\epsilon_{1}\}$, and is a fixed small constant which will be chosen later.
\end{itemize}
Note it is not hard to see 
\begin{equation}\label{lowernonescaping}
\liminfk \|f_{k,1}\|_{\hnorm}\geq \epsilon_0, \quad\liminfk \|f_{k,2}\|_{\hnorm}\geq \epsilon_{1}.
\end{equation}
Also, the support condition between $f_{k,1}, f_{k,2}$ implies for large $k$ that we have
\begin{equation}\label{decopuleenergyphysical}
|\hyperenergy(u_{k})-\hyperenergy(f_{k,1})-\hyperenergy(f_{k,2})|\lesssim \epsilon_{2}^{2},
\end{equation}
\begin{equation}\label{decopulekineticphysical}
|\|\nabla_{\hhh} u_{k}\|_{\hlt}^{2}-\|\nabla_{\hhh} f_{k,1}\|_{\hlt}^{2}-\|\nabla_{\hhh} f_{k,2}\|_{\hlt}^{2}|\lesssim \epsilon_{2}^{2}.
\end{equation}\\
We also remark that since $R_{k}$ goes to $\infty$, $f_{k,2}$ is essentially located far away from the origin, so by Hardy inequality \eqref{hardy}, we have  $\limk \|f_{k,2}\|_{\hls}=0.$ Thus, (up to replacement by a subsequence),
\begin{equation}\label{equivfk2}
\limk \|\nabla_{\hhh} f_{k,2}\|_{\hlt}^{2}= \limk \hyperenergy(f_{k,2}).
\end{equation}
Thus, when $\epsilon_{2}$ is small enough,  (in particular, small enough compared to $\epsilon_{0},\epsilon_{1}$), we have:
\begin{itemize}
\item By \eqref{decopulekineticphysical}, there is some $\epsilon_{3}$ which only relies on $\epsilon_{1},\epsilon_{0}$, such that
$$\max(\|\nabla_{\hhh} f_{1,k}\|_{\hlt},\|\nabla_{\hhh} f_{2,k}\|_{\hlt}) <\|\nabla_{\hhh} u_{k}\|_{\hlt}-\epsilon_{3}.$$
\item By \eqref{decopuleenergyphysical}, and the fact that $$\limk E_{\hhh}(f_{k,2})=\limk\|\nabla_{\hhh}f_{k,2}\|_{\hlt}^{2}>0,$$ (see \eqref{lowernonescaping}, \eqref{equivfk2}) there is $\epsilon_{4}$ such that for large k, we have
$E_{\hhh}(f_{k,1})<E_{\hhh}(u_{k})-\epsilon_{4}$,
\item Thus, energy trapping (Lemma \ref{energytrapping}) holds for $f_{k,1}$ for large $k$, so there exists $c_{0}=c_{0}(\epsilon_{3},\epsilon_{4})$, such that for large k, $E_{\hhh}(f_{k,1})\geq c_{0}\|\nabla_{\hhh} f_{k,1}\|_{L^2(\hhh)}^{2}$.
\item The above in turn (using \eqref{decopuleenergyphysical} ) shows for large k, $E_{\hhh}(f_{k,2})<E_{\hhh}(u_{k})$.
\end{itemize}
In particular, the above facts imply that for  $k$ large, $f_{k,1}, f_{k,2}\in \Omega$.
\begin{rem}
The point is, essentially all our argument is quantitative, our choice of $\epsilon_{3}, \epsilon_{4}$ only depends on $\epsilon_{0},\epsilon_{1}$ .
Of course, $\epsilon_{3},\epsilon_{4}$ depends on the fact that $\epsilon_{2}$ is small, however, once $\epsilon_{2}$ is smaller than some threshold determined by $\epsilon_{0},\epsilon_{1}$, our choice of $\epsilon_{3}$, $\epsilon_{4}$ { does not depend on the exact value of $\epsilon_{2}$.}\\
\end{rem}
We claim that we already have a contradiction.
We discuss three different cases:

\textbf{Case 1:} $\limk \|f_{k,1}\|_{\hlt}=0$.\\
Then $J(u_{k})=\frac{E_{\hhh}(f_{k},1)+E_{\hhh}(f_{k,2})+O(\epsilon_{2}^{2})}{\|f_{k,2}\|_{\hlt}^{2}}+o_{k}(1)$, which implies
\begin{equation}
\liminfk \frac{\hyperenergy(f_{k,2})}{\|f_{k,2}\|_{\hnorm}^{2}}\leq\limk J(u_{k})-\frac{\epsilon_{0}^{2}}{C}+C\epsilon_{2}^{2},
\end{equation} 
where $C$ is some universal large number.
Therefore, when $\epsilon_{2}$ is chosen small enough, 
\begin{equation}
\liminfk J(f_{k,2})<\limk J(u_{k})=\inf_{f\in \Omega} J(f).
\end{equation}
A contradiction.

\textbf{Case 2:} $\limk \|f_{k,2}\|_{L^{2}(\hhh)}$=0.
A similar argument to Case 1 produces a contradiction. 

\textbf{Case 3:} If both Case 1 and Case 2 do not occur, then we may assume $\limk \|f_{k,1}\|_{\hlt}=a_{0}>0$, and $\limk \|f_{k,2}\|_{\hlt}=b_{0}>0$,
We claim (again, up to replacement by a subsequence we assume all the limits exist)
\begin{equation}\label{tempo2}
\limk \frac{\hyperenergy(f_{k,1})}{\|f_{k,1}\|}=\gamma<\frac{1}{2}-\signot\equiv \limk J(u_{k}),
\end{equation}
{Which will give us a contradiction.}

Let us prove \eqref{tempo2} now. Let $\gamma:=\limk \frac{\hyperenergy(f_{k,1})}{\|f_{k,1}\|}$.
Since $\|f_{k,2}\|_{\hls}\rightarrow 0$, thus $$\liminfk \frac{\hyperenergy(f_{k,2})}{\|f_{k,2}\|_{\hlt}^{2}}\geq \frac{1}{2},$$
which gives us
\begin{equation}
\frac{1}{2}-\sigma_{0}=\limk J(u_{k})\geq \frac{a_{0}^{2}\gamma+\frac{1}{2}b_{0}^{2}}{a_{0}^{2}+b_{0}^{2}}+O(\epsilon_{2}^{2}).
\end{equation}
Thus, when $\epsilon_{2}$ is small enough, $\gamma<\frac{1}{2}-\signot$, which is \eqref{tempo2}.

\subsection{Proof of Lemma \ref{lemnonconcentration}}
Lemma \ref{lemnonconcentration} is  an analogue of Lemma \ref{lemnonescaping}, {and will be proved using Lemma \ref{decopulemassversion2},} the analogue of Lemma \ref{decouplemass}.

Before we begin the  proof, let us remark that for any $R\geq 1$, uniformly we have
\begin{equation}\label{notsobad}
\|\chi(Rx)h(x)\|_{\hnorm}\lesssim \|h\|_{\hnorm}, \forall h\in H_{rad}^{1}(\hhh).
\end{equation}
Now let us turn to the proof of the Lemma.
\begin{proof}[Proof of Lemma \ref{lemnonconcentration}]
We prove by contradiction.
If  Lemma \ref{lemnonconcentration} does not hold, then we would be able to find $R_{k}\xrightarrow{k\rightarrow \infty} \infty$ and $\tilde{\epsilon}_{0}>0$ such that
\begin{equation}
\limk \|\chi(R_{k}x) u_{k}\|_{\hnorm}>\tilde{\epsilon}_{0}.
\end{equation}
{We then claim that there is some} $R_{0}>0$, $\epsilon_{1}>0$ such that
\begin{equation}\label{tempo3}
\limsupk \|\nabla_{\hhh}[(1-\chi(R_{0}x))u_{k}])\|_{\hlt}\geq \epsilon_{1}.
\end{equation}
Indeed, if \eqref{tempo3} does not hold for any $R_{0},\epsilon_{1}>0$, one can find a sequence $
\widetilde{R}_{k}\rightarrow \infty$ such that $\limk\| (1-\chi(\widetilde{R}_{k}x))u_{k}\|_{\hnorm}=0$. Then $\limk \|u_{k}\|_{\hlt}=\limk \|\chi(\widetilde{R}_{k}x) u_{k}\|_{\hlt}$. This immediately contradicts Lemma \ref{lemnontrivial} since by H\"older's inequality and  the boundedness of $\{u_{k}\}$ in $\hnorm$, one easily derives that $\limk \|\chi(\widetilde{R}_{k}x) u_{k}\|_{\hlt}=0$.
Thus \eqref{tempo3} holds.

Now we apply Lemma \ref{decopulemassversion2} and find a decomposition such that
\begin{itemize}
\item $u_{k}=f_{k,1}+f_{k,2}+r_{k}$
\item $u_{k}=f_{k,1}$, for $|x|\geq \frac{1}{2R_{0}}$
\item $u_{k}=f_{k,2}$, for $|x|\leq \frac{2}{R_{k}}$
\item $f_{k,2}$ is supported in {$|x|\leq \frac{2}{R'_{k}}$, $\limk R'_{k}=\infty$}.\\ 
\item  $\|r_{k}\|_{\hnorm}\leq \epsilon_{2}$ for large $k$, {where $\epsilon_{2}$ is some small constant to be chosen later.}
\end{itemize}
Note {we still have energy and $L^2$ gradient decoupling, so} \eqref{decopuleenergyphysical}, \eqref{decopulekineticphysical} still hold.\\

Though the final value of  $\epsilon_{2}$ will be chosen at the very end of this subsection, we  first require $\epsilon_{2}<\frac{m_{0}}{2}$. This immediately implies $\limsupk \|f_{k,1}\|_{\hlt}\geq \frac{m_{0}}{2}$, since  by H\"older's inequality and boundedness of $\{f_{k,2}\}$ in $\hnorm$,  we have $$\limk \|f_{k,2}\|_{\hlt}=0.$$
Thus, up to replacement by a sub sequence,  we obtain  $\limk \|\nabla_{\hhh} f_{k,1}\|_{\hlt}\gtrsim m_{0}$. 

On the other hand, by \eqref{tempo3}, up to picking a sub sequence, we have $$\limk \|\nabla_{\hhh} [\chi(R_{k}x)f_{k,2}]\|_{\hlt}>0,$$ {and so by} \eqref{notsobad}, $\limk\|\nabla_{\hhh} f_{k,2}\|_{\hlt}>0$. Thus, we may find some $\epsilon_{1}$ such that
$\min (\|\nabla_{\hhh} f_{k,1}\|_{\hlt}, \|\nabla_{\hhh} f_{k,2}\|_{\hlt})>\epsilon_{1}$. Now choose $\epsilon_{2}$ at least smaller than $\frac{\epsilon_{1}}{2}$. { By applying \eqref{decopulekineticphysical} we know there is some} $\epsilon_{3}>0$, such that for large k, $$\max(\|\nabla_{\hhh} f_{k,1}\|_{\hlt}, \|\nabla_{\hhh} f_{k,2}\|_{\hlt})< \|\nabla_{\hhh} u_{k}\|_{\hlt}-\epsilon_{3}.$$

Next we show for large $k$, there is an $\epsilon_{4}$, such that $$\max(E_{\hhh}(f_{k, 1}),E_{\hhh}(f_{k,2}))\leq E_{\hhh}(u_{k})-\epsilon_{4}.$$  When $\epsilon_{2}$ is small enough, i.e. the error $r_{k}$ is small enough, then up to picking a subsequence one may assume,  for large $k, E_{\hhh}(f_{k,1})>\frac{1}{3}\hyperenergy(u_{k})$ or $E_{\hhh}(f_{k,2})>\frac{1}{3}\hyperenergy(u_{k})$. Without loss of generality, we assume  $E_{\hhh}(f_{k,1})>\frac{1}{3}\hyperenergy(u_{k})$. This implies $E(f_{k,2})<E(u_{k})-\tilde{\epsilon}_{4}$ for some $\tilde{\epsilon}_{4}>0$. Thus $f_{k,2}\in \Omega$ {and we can apply our energy trapping result}, Lemma \ref{energytrapping} on $f_{k,2}$. This  gives us a lower bound $E_{\hhh}(f_{k,2})\gtrsim \|\nabla_{\hhh}{f_{k,2}}\|_{\hnorm}^{2}\geq \epsilon^{2}_{1}$, and so, when $\epsilon_{2}$ is chosen even smaller, we can bound $E_{\hhh}(f_{k,1})$ strictly above by $E_{\hhh}(u_{k})$, by \eqref{decopuleenergyphysical}. 
Thus we confirm that we can find an $\epsilon_{4}$
such that $\max(E_{\hhh}(f,k_{1}),E_{\hhh}(f_{k,2}))\leq E_{\hhh}(u_{k})-\epsilon_{4}$.
In particular for large $k$, $f_{k,1}, f_{k,2}$ are in $\Omega$.

Now we claim $\limk J (f_{k,1}) < \frac{1}{2}-\signot$, which is a contradiction.
Indeed, $$J(u_{k})=\frac{E_{\hhh}(u_{k})}{\|u_{k}\|_{\hlt}}=\frac{E_{\hhh}(f_{k,1})+E_{\hhh}(f_{k,2})+O(\epsilon_{2}^{2})}{\|f_{k,1}\|_{\hlt}+O(\epsilon_{2}^{2})},$$ 
note that $E_{\hhh}(f_{k,2})\gtrsim \|\nabla_{\hhh} f_{k,2}\|_{\hlt}^{2}\geq \epsilon_{1}^{2}$, then choosing $\epsilon_{2}$ small enough, and for large $k$, $J(f_{k,1})<\limk J(u_{k})$, {making $\{f_{k,1}\}_{k}$ a better minimizing sequence which is a contradiction}.

\end{proof}

\subsection{Profile decomposition and the end of proof for Proposition \ref{lemmamain}}\label{sectionfinalstory}
In this Subsection, under Assumption \ref{assume},  we use profile decomposition on the Euclidean space to construct the minimizer $v$.

We first make a  simple observation, $\{u_{k}\}_{k}$ can be viewed as a bounded sequence  in $H^{1}(\RRR^{3})$, then up to picking subsequence, we may assume $\{u_{k}\}$ admits a profile decomposition with $\{v_{j};\{\lambda_{j,k}\}_{k}\}_{j}$ in the sense of Lemma \ref{concentrationcompactness}.
We will show one profile actually gives the minimizer.

Without loss of generality we may assume there are only three types of profiles:
\begin{itemize}
\item type (a): Profile $v_{j}$ such that $\limk\lambda_{j,k}=\infty$.
\item type (b): Profile $v_{j}$ such that $\lambda_{j,k}\equiv 1$ for all $k$.
\item type (c): Profile $v_{j}$ such that $\limk \lambda_{j,k}=0$.
\end{itemize} 
Note there is at most one profile of type (b), since  $$j\neq j'\Longrightarrow \limk \frac{\lambda_{j,k}}{\lambda_{j',k}}+\frac{\lambda_{j',k}}{\lambda_{j,k}}
=+\infty.$$
We first show
\begin{lem}\label{aaa}
Type (a) profiles do not exist.
\end{lem}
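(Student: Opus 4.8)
The plan is to argue by contradiction: suppose some type (a) profile $v_{1}$ is nonzero, with $\limk \lambda_{1,k}=\infty$. The guiding principle is that a type (a) profile represents $H^{1}$ mass that drifts off to spatial infinity, which is exactly what Lemma \ref{lemnonescaping} forbids. Indeed, the Euclidean rescaling $\frac{1}{\lambda_{1,k}^{1/2}}v_{1}(x/\lambda_{1,k})$ carries its $\hdot$ energy on the shell $|x|\sim \lambda_{1,k}$, which escapes every fixed ball as $k\to\infty$.

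First I would transfer the no-escaping statement from the hyperbolic to the Euclidean norm. Fix $\eps>0$ and use Lemma \ref{lemnonescaping} to produce $R$ with $\limsupk \|(1-\chi(x/R))u_{k}\|_{\hnorm}<\eps$. Since for radial functions the gradient is the radial derivative and $\sinh^{2}r\ge r^{2}$ pointwise, the hyperbolic Dirichlet norm dominates the Euclidean one on the region $|x|\ge R/2$; after accounting for the harmless $O(1/R)$ commutator coming from $\nabla\chi$, this gives $\limsupk \|(1-\chi(x/R))u_{k}\|_{\hdot}\lesssim \eps$. In other words, outside a fixed Euclidean ball the sequence carries almost no $\hdot$ energy.

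Next I would show the type (a) profile violates this. For fixed $R$ and $k$ large we have $\lambda_{1,k}\gg R$, so the cutoff $1-\chi(\cdot/R)$ equals $1$ on the shell where $\frac{1}{\lambda_{1,k}^{1/2}}v_{1}(\cdot/\lambda_{1,k})$ is supported up to negligible tails; hence $\|(1-\chi(x/R))\tfrac{1}{\lambda_{1,k}^{1/2}}v_{1}(x/\lambda_{1,k})\|_{\hdot}\to \|v_{1}\|_{\hdot}$ as $k\to\infty$. Feeding the profile decomposition \eqref{profile} into $\|(1-\chi(x/R))u_{k}\|_{\hdot}^{2}$ and using the asymptotic $\hdot$-orthogonality of the rescaled profiles and of the remainder, the squared norm splits into a sum of nonnegative contributions; keeping only the $v_{1}$-term and discarding the rest yields $\liminfk \|(1-\chi(x/R))u_{k}\|_{\hdot}\ge \|v_{1}\|_{\hdot}>0$. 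Choosing $\eps<\|v_{1}\|_{\hdot}$ contradicts the previous paragraph, so no nonzero type (a) profile can exist.

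The main obstacle is the bookkeeping in the last step: one must make precise that, after multiplication by the cutoff, the rescaled profiles remain asymptotically orthogonal in $\hdot$ and the cross terms with the remainder vanish in the double limit $k\to\infty$, $J\to\infty$. This is standard profile-decomposition technology, but it requires carefully controlling the commutators $[\nabla,\chi(\cdot/R)]$ and exploiting the scale separation $|\ln(\lambda_{j,k}/\lambda_{1,k})|\to\infty$. The transfer between the hyperbolic and Euclidean gradients via $\sinh^{2}r\ge r^{2}$, and their local comparability $r^{2}\le \sinh^{2}r\le C_{R}r^{2}$ on bounded balls, is the only genuinely hyperbolic ingredient, and it must be invoked only on regions where the two metrics are comparable or where the inequality points in the favorable direction.
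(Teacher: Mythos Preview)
Your argument is correct but takes a heavier route than the paper's. The paper works entirely in $L^{6}(\rrr)$ rather than $\hdot$, and it does \emph{not} invoke Lemma \ref{lemnonescaping} at all. Instead it uses the radial pointwise decay from Lemma \ref{lemhardyinequality} directly: since $|u_{k}(r)|\lesssim e^{-3r/4}$ uniformly in $k$, one has $\lim_{R\to\infty}\sup_{k}\|u_{k}1_{|x|\ge R}\|_{\rls}=0$. On the other hand, if $v_{1}\neq 0$ then $\|v_{1}1_{1/R_{0}<|x|<R_{0}}\|_{\rls}>\epsilon_{0}$ for some $R_{0},\epsilon_{0}$, and a standard profile-decomposition localization (the remainder vanishes in $L^{6}$ and the other profiles live at separated scales) gives $\|\chi(\cdot/\lambda_{1,k})u_{k}\|_{\rls}=\|\chi v_{1}\|_{\rls}+o_{k}(1)\ge \epsilon_{0}/2$ for large $k$, with the left side supported in $|x|\gtrsim \lambda_{1,k}\to\infty$. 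This contradicts the uniform $L^{6}$ decay.

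The trade-offs: your approach makes the conceptual link to Lemma \ref{lemnonescaping} explicit, which is satisfying structurally, but you then have to pay for (i) the rather involved proof of that lemma and (ii) the cutoff-orthogonality bookkeeping in $\hdot$ that you correctly flag as the main obstacle. The paper's $L^{6}$ argument sidesteps both: the Hardy-type decay gives the tail smallness in one line, and the $L^{6}$ localization of a single profile is a direct scaling identity plus the vanishing of the remainder in $L^{6}$, with no commutators to track. If you want to lighten your write-up, note that the lower bound $\liminf_{k}\|(1-\chi(\cdot/R))u_{k}\|_{\hdot}\ge\|v_{1}\|_{\hdot}$ follows more cheaply from weak lower semicontinuity after rescaling by $\lambda_{1,k}$ (the rescaled cutoff tends to $1$ and the rescaled sequence converges weakly to $v_{1}$), rather than from full orthogonality of the cutoff profiles.
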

\begin{proof}
{We will focus on the function spaces} $\hdot$ and $\rls$. The proof is  by contradiction.
Suppose $v_{1}$ is a profile of type (a), if $v_{1}\neq 0$, then there is $R_{}0>0,\epsilon_{0}>0$, such that
\begin{equation}\label{nontriciall6}
\|v_{1} 1_{\frac{1}{R_{0}}<|x|<R_{0}}\|_{L^{6}(\RRR^{3})}>\epsilon_{0}\\
\end{equation}
Now let $\chi$ be some smooth bump function  
$$\chi(x)=\begin{cases}
1, \quad 1/R_{0}\leq |x|\leq R_{0},\\
0, \quad |x|\leq 1/(2R_{0}) \text{ or } |x|\geq 2R_{0},
\end{cases}$$
Then by the property of profile decomposition,
\begin{equation}\label{alescape}
\|\chi(x/\lambda_{1,k})u_{k})\|_{\rls}= \left\|\frac{1}{\lambda_{1,k}}v_{1} \left(\frac{x}{\lambda_{1,k}}\right)\chi\left(\frac{x}{\lambda_{1,k}}\right)\right\|_{\rls}+o_{k}(1)\equiv \|\chi v_{1}\|_{\rls}+o_{k}(1).
\end{equation}
 Since $u_{k}$ is bounded in $\hnorm$ and using pointwise control from the Hardy inequality (Lemma \eqref{lemhardyinequality}), we obtain $$\lim_{R\rightarrow\infty}\sup_{k}\|u_{k}1_{|x|\geq R}\|_{\rls}=0,$$
which is clearly a contradiction to \eqref{alescape}, since $\lim_{k\rightarrow \infty} \lambda_{1,k}=\infty$.
\end{proof}
We then show
\begin{lem}
Type (c) profile does not exist .
\end{lem}
\begin{proof}
One argues line by line as in Lemma \ref{aaa}, and ends up with
\begin{equation}
\|\chi(x/\lambda_{1,k})u_{k})\|_{\rls}=\left\|\frac{1}{\lambda_{1,k}}v_{1}\left(\frac{x}{\lambda_{1,k}}\right)\chi\left(\frac{x}{\lambda_{1,k}}\right)\right\|_{\rls}+o_{k}(1)\equiv \|\chi v_{1}\|_{\rls}+o_{k}(1).
\end{equation}
This indicates there exists $R_{k}\rightarrow \infty$, such that for large k, $\|\chi(R_{k}x)u_{k}\|_{\rls}>\epsilon>0$ for some $\epsilon$, which is equivalent to
 $\|\chi(R_{k}x)u_{k}\|_{\hls}>\epsilon_{1}>0$ for some $\epsilon_{1}$, a contradiction to Lemma \ref{lemnonconcentration}. 
\end{proof}

Only two possibility are left now.
\begin{itemize}
\item \textbf{Case 1}: There are no non-trivial profiles, i.e.
\begin{equation}
\limk \|u_{k}\|_{\rls}\rightarrow 0.
\end{equation}
\item \textbf{Case 2}: There is one nontrivial profile $v,$ i.e. $u$ admits a profile decomposition:
\begin{equation}
u_{k}=v+r_{k}
\end{equation}
where $r_{k}\rightharpoonup 0$ in $\hdot$ and $\limk \|r_{k}\|_{\rls}=0$.
\end{itemize}
We first exclude Case 1.
By Lemma \ref{lemnonescaping}, for any $\epsilon>0$, there is a compactly supported cut-off function $\chi $ such that
\begin{equation}\label{eqtempo314}
\limk \|u_{k}-\chi u_{k}\|_{\hnorm}\leq \epsilon.
\end{equation}
Meanwhile, since $\chi$ is compact supported, we have
\begin{equation}\label{eqtempo318}
\|\chi u_{k}\|_{\hlt}\lesssim \|\chi u_{k}\|_{\rlt} \lesssim \|\chi u_{k}\|_{\rls}\lesssim \| u_{k}\|_{\rls}.
\end{equation}
{However we have assumed that $\limk \|u_{k}\|_{\rls}=0$
and so \eqref{eqtempo314} and \eqref{eqtempo318} imply }
$
\limk \|u_{k}\|_{\hlt}=0
$, which clearly contradicts $$\limk \|u_{k}\|_{L^{2}(\hhh)}>m_{0}>0, $$  see Lemma \ref{lemnontrivial}.

Now the only possibility is \textbf{Case 2}.
We again fix some bump function $$\chi=
\begin{cases} 
1, |x| \leq 1,\\
0, |x| \geq 2.
\end{cases}
$$
{We first observe that although profile decomposition only ensures $v\in \hdot \cap \rls$, we can actually show} $v\in \hnorm$.\\
Indeed, $u_{k}$ weakly converges to $v$ in $\hdot$ (which of course implies weak convergence in $\rls$).
And $u_{k}$ is uniformly bounded in $\hnorm$. Thus $v$ must be in $\hnorm$.
In particular, $r_{k}$ is also uniformly bounded in $\hnorm$.

Furthermore, we claim:
\begin{equation}\label{vanishl6fortheremainder}
\limk \|r_{k}\|_{\hls}=0.
\end{equation}
Indeed,  for any $R$, we have $r_{k}=r_{k}\chi(\frac{x}{R})+(1-\chi(\frac{x}{R}))r_{k}$.
By $\limk \|r_{k}\|_{\rls}=0$, we get for any $R$ fixed
\begin{equation}\label{eqvanish1}
\limk \left\|\chi\left(\frac{x}{R}\right)r_{k}\right\|_{\hls}=0.
\end{equation}
{Since $r_{k}$ is uniformly bounded in $\hnorm$, we can apply the Hardy inequality (Lemma \ref{lemhardyinequality}) to see }
\begin{equation}\label{eqvanish2}
\lim_{R\rightarrow \infty} \sup_{k} \left\|\left(1-\chi\left(\frac{x}{R}\right)\right)u_{k}\right\|_{\hls}=0,
\end{equation}
and \eqref{vanishl6fortheremainder} clearly follows from \eqref{eqvanish1} and \eqref{eqvanish2}.\\
Now we show the following orthogonality property
\begin{equation}\label{critical orthogonality}
\limk (v, r_{k})_{\hnorm}=0.
\end{equation}
Since $v\in H^{1}(\hhh)$ and $\{r_{k}\}_{k}$ is a bounded sequence in $\hnorm$, we can always use a sequence $v_{n}\in C_{c}^{\infty}$ to approximate $v$. Therefore we need only show that for any $w\in C_{c}^{\infty}$  we have
\begin{equation}
\limk (w,r_{k})_{\hnorm}=0,
\end{equation}
This directly follows from \eqref{vanishl6fortheremainder} since $$(w,r_{k})_{H^{1}}=\int_{\hhh}((-1+\Delta_{\hhh})w)r_{k}.$$
A similar argument shows that for any R, we have
\begin{equation}
\limk \left(\chi\left(\frac{x}{R}\right) v, \chi\left(\frac{x}{R}\right) r_{k}\right)_{\hnorm}=0.
\end{equation}
Next, note that  $v\in \bar{\Omega}/{0}$.

We want to show $J(v)\leq \frac{1}{2}-\delta_{0}\equiv \limk J(u_{k})$.
From the view point of Lemma \ref{lemnonescaping}, we need only show for all $R>0$
\begin{equation}\label{endofstory}
\limk J\left(\chi\left(\frac{x}{R}\right)u_{k}\right)\geq \limk J\left(\chi\left(\frac{x}{R}\right)v\right).
\end{equation}
(One may need to replace by a subsequence to ensure the limit exists.)

 Since $\|r_{k}\|_{\hls}\xrightarrow{k} 0$, we have $$\limk E(\chi(\frac{x}{R})r_{k})\geq 0,  \limk \|\chi(\frac{x}{R}) r_{k}\|_{\hlt}=0,$$ {and since $u_k=v+r_k$} \eqref{endofstory} clearly follows. This concludes the proof.
 
We remark the above argument actually shows $\limk \|r_{k}\|_{\hnorm}=0$ and implies
\begin{equation}\label{finalstory}
\limk \|u_{k}-v\|_{\hnorm}=0.
\end{equation}

\section{Acknowledgment}
We thank Gigliola Staffilani for consistent support, helpful discussion and careful reading of the material. Part of this work was done in MIT summer SPUR program 2014, we thank Pavel Etingof and David Jerison for helpful discussion. Part of this work was done while Chenjie Fan was in residence at the Mathematical Sciences Research Institute in Berkeley, California, during Fall 2015 semester, under the support of NSF Grant No. 0932078000.
\appendix
\section{A collection of technical lemmas}
\begin{lem}\label{lemhardyinequality}
For any $u\in H^{1}_{rad}(H^{3})$, 
\begin{equation}\label{hardy}
|u(r)|\lesssim e^{-\frac{3}{4}r}, \forall r\geq 10.\\
\end{equation}
\end{lem}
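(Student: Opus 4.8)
The plan is to reduce everything to a one-dimensional weighted estimate and then close it with the fundamental theorem of calculus together with Cauchy--Schwarz. For a radial function $u=u(r)$ the covariant gradient is simply $\partial_r u$ (the $rr$-component of the metric is $1$), so in the polar coordinates of the excerpt
$$\|u\|_{L^2(\hhh)}^2 = 4\pi\int_0^\infty |u(r)|^2\sinh^2 r\,dr,\qquad \|\nabla_{\hhh} u\|_{L^2(\hhh)}^2 = 4\pi\int_0^\infty |u'(r)|^2\sinh^2 r\,dr,$$
both finite since $u\in H^1_{rad}(\hhh)$. The only structural facts I will use about the weight are that $\sinh^2 s\gtrsim e^{2s}$, equivalently $\sinh^{-2}s\lesssim e^{-2s}$, for $s\ge 1$, and that $\sinh^{-2}s$ is decreasing, so $\sinh^{-2}s\le\sinh^{-2}r$ whenever $s\ge r$.

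First I would record the pointwise representation $u(r)^2 = -\int_r^\infty 2\,u(s)u'(s)\,ds$ for every $r\ge 10$. This requires knowing that $u(s)\to 0$ along some sequence $s_n\to\infty$, which follows from $\int_0^\infty |u|^2\sinh^2 s\,ds<\infty$: integrability forces $\liminf_{s\to\infty}|u(s)|^2\sinh^2 s = 0$, and since $\sinh^2 s_n\to\infty$ this yields a subsequence with $u(s_n)\to 0$; one then passes to the limit in $u(r)^2-u(s_n)^2=-\int_r^{s_n}2uu'\,ds$, the right-hand side converging absolutely by the Cauchy--Schwarz bound below. Justifying this decay at infinity is the one genuinely delicate point; everything else is a mechanical estimate. (Alternatively one may sidestep it by choosing $r_0\in[r,r+1]$ at which $|u(r_0)|$ does not exceed its $L^2$-average over the shell, which is already $O(e^{-r})$, and integrating from $r_0$.)

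Next I would estimate the representation by Cauchy--Schwarz, splitting the weight symmetrically:
$$|u(r)|^2 \le 2\int_r^\infty |u||u'|\,ds \le 2\Big(\int_r^\infty |u|^2\,ds\Big)^{1/2}\Big(\int_r^\infty |u'|^2\,ds\Big)^{1/2}.$$
For each factor I insert and remove the weight, using $\int_r^\infty |u|^2\,ds = \int_r^\infty |u|^2\sinh^2 s\,\sinh^{-2}s\,ds \le \sinh^{-2}r\int_r^\infty |u|^2\sinh^2 s\,ds \lesssim e^{-2r}\|u\|_{L^2(\hhh)}^2$, and identically for $u'$. Multiplying the two factors gives $|u(r)|^2\lesssim e^{-2r}\|u\|_{H^1(\hhh)}^2$, i.e. $|u(r)|\lesssim e^{-r}$ for $r\ge 10$. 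Since $e^{-r}\le e^{-3r/4}$, this is in fact stronger than the claimed bound, so the lemma follows; the weaker exponent $3/4$ is all that is needed downstream and leaves ample slack in the constants. To summarize, the main obstacle is purely the justification of the decay of $u$ at infinity that legitimizes the fundamental-theorem-of-calculus identity, while the weighted Cauchy--Schwarz step is entirely routine.
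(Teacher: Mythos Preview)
Your argument is correct and in fact delivers the sharper decay $|u(r)|\lesssim e^{-r}\|u\|_{H^1(\hhh)}$, which implies the stated $e^{-3r/4}$ bound. The route, however, differs from the paper's. The paper localizes with a smooth cutoff $\chi(\cdot - r_0)$ supported in $[r_0-2,r_0+2]$, observes that the weighted integral $\int (|\chi u|^2 + |(\chi u)'|^2)\sinh^2 r\,dr$ is controlled by $\|u\|_{H^1(\hhh)}^2$, and then invokes the one-dimensional Sobolev embedding $H^1(\RRR)\hookrightarrow L^\infty(\RRR)$ as a black box; the factor $\sinh^{-1}(r_0-2)$ drops out when the weight is removed on the compact interval. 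Your approach instead integrates $u^2$ from $r$ to infinity and closes directly with Cauchy--Schwarz against the monotone weight. The trade-off is that the paper's cutoff makes the decay-at-infinity issue disappear entirely (the localized function has compact support, so no limit along $s_n\to\infty$ needs to be justified), whereas your version must handle that point explicitly but in exchange avoids quoting Sobolev embedding and yields the optimal exponent transparently. Your parenthetical alternative---picking $r_0\in[r,r+1]$ where $|u(r_0)|$ is dominated by its weighted $L^2$-average on the shell---is essentially a hybrid of the two and is the cleanest way to sidestep the limit justification while keeping your direct estimate.
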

\begin{proof}
We believe the result is classical and the power we get here is not optimal. We include a quick proof for the reader's convenience.
Without loss of generality we assume $\|u\|_{H^{1}(\hhh)}=1$ and we  show $|u(r_{0})|\lesssim  e^{-\frac{3}{4}r_{0}}, \forall r_{0}\geq 10$.
Indeed, since $u$ is radial and $\|u\|_{H^{1}}(\hhh)=1$, one immediately gets
\begin{equation}
\int_{0}^{\infty}(|\partial_{r}u(r)|^{2}+|u(r)|^{2})\sinh^{2} rdr\lesssim 1
\end{equation}
The idea is to view $u$ as a function on $\RRR$, we will rely on the usual one dimensional Sobolev embedding $H^{1}(\RRR)\hookrightarrow L^{\infty}(\RRR)$.\\
Indeed, let $\chi(x)$ be some smooth bump function on $\RRR$, such that
\begin{equation}
\chi(x) =
\begin{cases}
1, |x|\leq 1,\\
0, |x|\geq 2.
\end{cases} 
\end{equation}
Then one immediately obtains
\begin{equation}\label{tempo1}
\int_{\RRR} |\chi(r-r_{0})u(r)|^{2}+|\partial_{r} (\chi(r-r_{0})u(r))|^{2}\sinh^{2}rdr\lesssim 1.
\end{equation}
Furthermore by \eqref{tempo1}, one gets $$\|\chi(r-r_{0})u(r)\|_{L^{\infty}(\RRR)}\lesssim \|\chi(r-r_{0})u(r)\|_{H^{1}(\RRR)}\lesssim (\sinh (r_{0}-2))^{-1}\lesssim e^{-\frac{3}{4}r_{0}}.$$\\
This completes the proof.
\end{proof}
\begin{lem}\label{vanishingofmass}
For any bounded sequence $\{f_{k}\}_{k}$ in $H^{1}_{rad}({\hhh})$, we have
\begin{equation}
\lim_{R\rightarrow \infty}\sup_{k} \|1_{|x| \geq R}f_{k}(x)\|_{\hls}=0.
\end{equation}
\end{lem}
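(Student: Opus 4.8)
The plan is to exploit the pointwise exponential decay supplied by Lemma \ref{lemhardyinequality} and to observe that, in the $L^{6}$ norm, this decay comfortably dominates the exponential growth of the hyperbolic volume element. Since the statement is a uniform-in-$k$ claim while the pointwise bound scales linearly in the $H^{1}$ norm, the boundedness hypothesis will immediately furnish the required uniformity.

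First I would reduce the $L^{6}$ norm to a one-dimensional radial integral. For radial $f_{k}$ the volume element gives
\begin{equation}
\|1_{|x|\geq R} f_{k}\|_{\hls}^{6} = 4\pi \int_{R}^{\infty} |f_{k}(r)|^{6}\,\sinh^{2} r \, dr .
\end{equation}
It is harmless to assume $R\geq 10$ throughout, since we are letting $R\to\infty$.

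Next I would invoke Lemma \ref{lemhardyinequality}. By homogeneity in $\|f_{k}\|_{\hnorm}$, that lemma gives $|f_{k}(r)|\lesssim \|f_{k}\|_{\hnorm}\, e^{-\frac{3}{4}r}$ for all $r\geq 10$. Raising to the sixth power produces the decay $e^{-\frac{9}{2}r}$, while $\sinh^{2} r \lesssim e^{2r}$. The integrand therefore decays like $e^{-\frac{5}{2}r}$, and integrating from $R$ to infinity yields
\begin{equation}
\|1_{|x|\geq R} f_{k}\|_{\hls}^{6} \lesssim \|f_{k}\|_{\hnorm}^{6}\, e^{-\frac{5}{2}R}.
\end{equation}

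Finally, using that $\{f_{k}\}$ is bounded in $\hnorm$, say $\sup_{k}\|f_{k}\|_{\hnorm}\leq M$, I would conclude that $\sup_{k}\|1_{|x|\geq R} f_{k}\|_{\hls}^{6}\lesssim M^{6}\, e^{-\frac{5}{2}R}\to 0$ as $R\to\infty$, which is exactly the assertion. There is no genuine obstacle here: the only point worth verifying is that the decay rate $\tfrac{9}{2}$ coming from the sixth power of the pointwise bound strictly exceeds the volume growth rate $2$, which it does with room to spare, and this is precisely why the argument closes.
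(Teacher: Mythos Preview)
Your argument is correct and is exactly what the paper intends: it merely records that Lemma \ref{vanishingofmass} ``directly follows from Lemma \ref{lemhardyinequality},'' and your computation spelling out the exponents (pointwise decay $e^{-9r/2}$ beating the volume growth $e^{2r}$) is the intended one-line verification.
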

Lemma \ref{vanishingofmass} directly follows from Lemma \ref{lemhardyinequality}.

\begin{lem}\label{lemmass}
$\forall \sigma>0$, there exists $\epsilon>0$, such that for all $f\in \hnorm, f\neq 0$
\begin{equation}
\|f\|_{\hnorm}\leq \epsilon\Longrightarrow J(f)\geq \frac{1}{2}-\sigma
\end{equation}
\end{lem}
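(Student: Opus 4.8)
The plan is to read the constant $\tfrac12$ directly off the bottom of the spectrum of the hyperbolic Laplacian and to treat the $L^6$ contribution to $E_{\hhh}$ as a lower-order perturbation that becomes negligible once $f$ is small. Concretely, I split
$$J(f)=\frac{\tfrac12\|\nabla_{\hhh}f\|_{\hlt}^2-\tfrac16\|f\|_{\hls}^6}{\|f\|_{\hlt}^2}=\frac12\,\frac{\|\nabla_{\hhh}f\|_{\hlt}^2}{\|f\|_{\hlt}^2}-\frac16\,\frac{\|f\|_{\hls}^6}{\|f\|_{\hlt}^2}.$$
The first term is bounded below by $\tfrac12$ by the spectral gap (statement (a) in the proof of Theorem \ref{thmmain}), i.e. $\|\nabla_{\hhh}f\|_{\hlt}^2\ge\|f\|_{\hlt}^2$. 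The negative second term is where the hypothesis $\|f\|_{\hnorm}\le\epsilon$ must be used, via the Sobolev embedding of Lemma \ref{sobhyp}.

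I would introduce $a:=\|\nabla_{\hhh}f\|_{\hlt}^2$ and $m:=\|f\|_{\hlt}^2$, so that $\|f\|_{\hnorm}^2=a+m\le\epsilon^2$ and the spectral gap reads $t:=a/m\ge1$. Bounding the sextic term by Lemma \ref{sobhyp}, $\|f\|_{\hls}^6\le C_3^6 a^3$, gives
$$J(f)\ge\frac12 t-\frac16 C_3^6\,\frac{a^3}{m}=\frac12 t-\frac16 C_3^6\,t^3 m^2.$$
The only point requiring care is that $a^3/m$ does not blow up as $m\to0$: writing $a=tm$, the constraint $a+m\le\epsilon^2$ forces $m\le\epsilon^2/(t+1)$, hence $t^3 m^2\le\epsilon^4\,t^3/(t+1)^2\le\epsilon^4 t$ since $t\le t+1$. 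Therefore
$$J(f)\ge t\Bigl(\frac12-\frac16 C_3^6\epsilon^4\Bigr)\ge\frac12-\frac16 C_3^6\epsilon^4,$$
where the final inequality uses $t\ge1$ together with the nonnegativity of the bracket for small $\epsilon$.

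It then suffices to choose $\epsilon=\min\{(6\sigma/C_3^6)^{1/4},\,(3/C_3^6)^{1/4}\}$: the second entry keeps the bracket nonnegative, while the first forces $\tfrac16 C_3^6\epsilon^4\le\sigma$, so that $J(f)\ge\tfrac12-\sigma$ (when $\sigma\ge\tfrac12$ the bound degrades to $J(f)\ge0\ge\tfrac12-\sigma$, which is still fine). There is no genuine obstacle: the content of the lemma is precisely that $\tfrac12$, the infimum of $\|\nabla_{\hhh}f\|_{\hlt}^2/\|f\|_{\hlt}^2$, is the effective value of $J$ in the small-data regime, and the only mild subtlety is the bookkeeping of the ratio $t=a/m$ needed to tame the apparently singular quantity $a^3/m$.
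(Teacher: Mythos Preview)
Your proof is correct and is essentially a fleshed-out version of the paper's one-line sketch: both argue that in the small-data regime the $L^6$ term is a lower-order perturbation (controlled via Sobolev), so $J(f)$ is governed by $\tfrac12\|\nabla_{\hhh}f\|_{\hlt}^2/\|f\|_{\hlt}^2\ge\tfrac12$ from the bottom of the spectrum of $-\Delta_{\hhh}$. Your introduction of $t=a/m$ to handle the potentially singular ratio $a^3/m$ is exactly the small extra bookkeeping needed to make the sketch rigorous, and it is the natural way to do it.
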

\begin{proof}
Essentially, when $f$ falls into the small data regime (i.e. $\|\nabla_{\hhh} f\|_{H^{1}}\leq \epsilon$), the $L^{6}$ part in the $J(f)$ can be neglected, and $J(f)\approx \frac{1}{2}\|\nabla_{\hhh} f\|^{2}_{\hlt}/\|f\|^{2}_{\hlt}$, while the latter is not smaller than $\frac{1}{2}$ due to the spectral property of $-\Delta_{H^{3}}$.\\
\end{proof}
\begin{lem}\label{decouplemass}
Given $R>0$, and $R_{k}\rightarrow \infty$, $\epsilon>0$, for any bounded sequence $\{f_{k}\}_{k}$ in $H^{1}_{rad}(H^{3})$ one can always (up to replacement by a sub-sequence) find a decomposition
\begin{equation}
f_{k}=f_{k,1}+f_{k,2}+r_{k} 
\end{equation}
such that for $k$ large enough,
\begin{itemize}
\item  $f_{k}=f_{k,1}$ for $|x|\leq R$
\item $f_{k}=f_{k,2}$ for $|x|\geq R_{k}$
\item $f_{k,1}$ and $f_{k,2}$ do not have common support.
\item $f_{k,2}$ vanishes for $|x|\leq \widetilde{R}_{k}$, $\limk \widetilde{R}_{k}=\infty$
\item $\|r_{k}\|_{H^{1}(H^{3})}\leq \epsilon$.
\end{itemize}
\end{lem}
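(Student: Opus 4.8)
The plan is to run the standard ``good annulus plus cut-off'' argument: since $\{f_k\}$ is bounded in $\hnorm$, its $H^1$ energy spread over many disjoint shells must be tiny on some shell, and we cut $f_k$ there into an inner piece, an outer piece, and a small remainder. First I would fix, for each large $k$, the window of radii $[\sqrt{R_k},R_k/2]$ and decompose it into the (essentially disjoint) dyadic shells $S_j:=\{2^{j}\le |x|\le 2^{j+1}\}$. Then $\sum_j \|f_k\|_{H^1(S_j)}^2\le \|f_k\|_{\hnorm}^2\le C$, while the number of shells meeting this window is comparable to $\log R_k\to\infty$. By pigeonhole there is a shell, which I write as $[a_k,2a_k]$ with $\sqrt{R_k}\le a_k\le R_k/2$, on which $\|f_k\|_{H^1([a_k,2a_k])}\to 0$. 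In particular $a_k\to\infty$, so for $k$ large we automatically have $a_k\ge R$ and $2a_k\le R_k$.

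Next I would introduce two radial cut-offs with disjoint supports separated by this shell: $\chi_{1}=1$ on $\{|x|\le a_k\}$ and $\chi_1=0$ on $\{|x|\ge \tfrac43 a_k\}$, and $\chi_{2}=0$ on $\{|x|\le \tfrac53 a_k\}$, $\chi_2=1$ on $\{|x|\ge 2a_k\}$, each transition having radial length $\sim a_k$, so that $|\nabla_{\hhh}\chi_i|\lesssim a_k^{-1}$. Setting $f_{k,1}:=\chi_1 f_k$, $f_{k,2}:=\chi_2 f_k$ and $r_k:=(1-\chi_1-\chi_2)f_k$, the first four properties are immediate from the choice of supports: on $\{|x|\le R\}\subseteq\{|x|\le a_k\}$ we have $\chi_1=1,\chi_2=0$, hence $f_k=f_{k,1}$; on $\{|x|\ge R_k\}\subseteq\{|x|\ge 2a_k\}$ we have $\chi_2=1,\chi_1=0$, hence $f_k=f_{k,2}$; the supports of $\chi_1,\chi_2$ are disjoint; and $f_{k,2}$ vanishes on $\{|x|\le \tfrac53 a_k\}$, so we may take $\widetilde R_k:=\tfrac53 a_k\to\infty$.

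The only genuine estimate is the remainder bound $\|r_k\|_{\hnorm}\le\epsilon$. Since $r_k$ is supported in the shell $[a_k,2a_k]$, the Leibniz rule gives
\begin{equation}
\|r_k\|_{\hnorm}\lesssim \|f_k\|_{H^1([a_k,2a_k])}+\big\|(\nabla_{\hhh}\chi_1+\nabla_{\hhh}\chi_2)f_k\big\|_{\hlt}\lesssim \|f_k\|_{H^1([a_k,2a_k])}+\tfrac{1}{a_k}\|f_k\|_{\hlt}.
\end{equation}
The first term tends to $0$ by the pigeonhole choice of the shell, and the second tends to $0$ since $a_k\to\infty$ and $\{f_k\}$ is bounded; hence $\|r_k\|_{\hnorm}\le\epsilon$ for $k$ large (passing to a subsequence if one wishes the ambient limits to exist, though it is not needed here). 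I expect the main obstacle to be exactly the tension between two requirements on the cut radius, that it carry \emph{small annular energy} and simultaneously \emph{tend to infinity}; this is resolved precisely by restricting the pigeonhole to the window $[\sqrt{R_k},R_k/2]$, which still contains $\to\infty$ many shells, so a small-energy shell can be found with inner radius $a_k\ge\sqrt{R_k}\to\infty$. Everything else (the Leibniz splitting, the cut-off gradient bound $|\nabla_{\hhh}\chi_i|\lesssim a_k^{-1}$ coming from the radial structure of the hyperbolic metric, and the additivity of the shell energies) is routine.
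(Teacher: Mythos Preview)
Your argument is correct and is essentially the same as the paper's: both proofs use the pigeonhole principle on the dyadic shells lying between $R$ and $R_k$ to locate an annulus of small $H^1$ energy, and then perform a smooth radial cut-off there to split $f_k$ into inner, outer and remainder pieces. Your write-up is in fact a bit more careful than the paper's sketch (you make the choice $a_k\ge\sqrt{R_k}$ explicit to guarantee $\widetilde R_k\to\infty$ and you spell out the Leibniz estimate for $r_k$), but the underlying idea is identical.
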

The proof is elementary and essentially the pigeonhole principle, see Section \ref{bbbbbb}.
\begin{lem}\label{decopulemassversion2}
Given $R>0$, and $R_{k}\rightarrow \infty$, $\epsilon>0$, for any bounded sequence $\{f_{k}\}_{k}$ in $H^{1}_{rad}(H^{3})$ one can always (up to replacement by a sub-sequence) find a decomposition
\begin{equation}
f_{k}=f_{k,1}+f_{k,2}+r_{k} 
\end{equation}
such that for $k$ large enough,
\begin{itemize}
\item  $f_{k}=f_{k,1}$ for $|x|\geq R^{-1}$
\item $f_{k}=f_{k,2}$ for $|x|\leq R_{k}^{-1}$
\item $f_{k,1}$ and $f_{k,2}$ do not have common support.
\item $f_{k,2}$ vanishes for $x\geq \widetilde{R}_{k}^{-1}$, $\widetilde{R}_{k}\rightarrow \infty$.
\item $\|r_{k}\|_{H^{1}(H^{3})}\leq \epsilon$.
\end{itemize}
\end{lem}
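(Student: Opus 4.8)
The plan is to follow the scheme of Lemma~\ref{decouplemass} almost verbatim, replacing the annulus near infinity by the annulus near the origin and inverting the roles of the two radii; the only genuinely new point, forced by the degeneration of the hyperbolic volume element $\sinh^2 r\,dr$ at $r=0$, will be the control of the cut-off error. Set $M:=\sup_k\|f_k\|_{\hnorm}<\infty$. Because the piece $f_{k,2}$ must both agree with $f_k$ on $\{|x|\le R_k^{-1}\}$ and vanish outside a ball whose radius $\widetilde R_k^{-1}$ tends to $0$, I first restrict the search for the cutting radius to the sub-annulus $\Sigma_k:=\{R_k^{-1}\le |x|\le R_k^{-1/2}\}$. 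Since $R_k\to\infty$, its inner radius tends to $0$ while $\Sigma_k$ still contains $N_k\sim \tfrac12\log_2 R_k\to\infty$ dyadic shells, which is exactly the room the pigeonhole step needs.

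Next I run the pigeonhole argument. Partition $\Sigma_k$ into the $N_k$ dyadic shells $S_1,\dots,S_{N_k}$, set $e_i:=\|f_k\|_{H^1(S_i)}^2$, so $\sum_i e_i\le M^2$. Grouping the shells into consecutive blocks of length $m_k:=\lfloor\sqrt{N_k}\rfloor$ and averaging, I can select one block $B_k$ of $m_k$ consecutive shells on which $\|f_k\|_{H^1(B_k)}^2\le (m_k/N_k)M^2\le M^2/\sqrt{N_k}\to0$; note $m_k\to\infty$ while $m_k=o(N_k)$, both of which will be used. On $B_k$ I place two radial cut-offs that are smooth functions of $\log|x|$: a function $\phi_k$ equal to $1$ on the inner boundary of $B_k$ and decreasing to $0$ across the inner third of $B_k$, and a function $\psi_k$ equal to $1$ through the middle third and decreasing to $0$ across the outer third, so that $\mathrm{supp}\,\phi_k$ and $\mathrm{supp}\,(1-\psi_k)$ are separated by the middle third. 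I then set $f_{k,2}:=\phi_k f_k$, $f_{k,1}:=(1-\psi_k)f_k$ and $r_k:=f_k-f_{k,1}-f_{k,2}=(\psi_k-\phi_k)f_k$. All five bullets are immediate: $f_{k,2}=f_k$ on $\{|x|\le R_k^{-1}\}$ and is supported in $\{|x|\le \widetilde R_k^{-1}\}$ with $\widetilde R_k^{-1}$ the outer edge of $\mathrm{supp}\,\phi_k$, which is $\le R_k^{-1/2}\to0$ so $\widetilde R_k\to\infty$; $f_{k,1}=f_k$ for $|x|$ beyond $B_k$, in particular for $|x|\ge R^{-1}$ since $R_k^{-1/2}<R^{-1}$ eventually; and the separation of the supports of $\phi_k$ and $1-\psi_k$ gives the disjointness.

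It remains to verify $\|r_k\|_{\hnorm}\le \epsilon$, which is the crux. Writing $\nabla_{\hhh} r_k=(\psi_k-\phi_k)\nabla_{\hhh} f_k+(\nabla_{\hhh}\psi_k-\nabla_{\hhh}\phi_k)f_k$, both $\|(\psi_k-\phi_k)\nabla_{\hhh} f_k\|_{\hlt}^2$ and $\|r_k\|_{\hlt}^2$ are bounded by $\|f_k\|_{H^1(B_k)}^2\to0$ from the pigeonhole step. The main obstacle is the cut-off--derivative term $\|(\nabla_{\hhh}\psi_k-\nabla_{\hhh}\phi_k)f_k\|_{\hlt}^2$: a shell of fixed width would give $|\nabla_{\hhh}\phi_k|\sim 1/|x|$ and, because $\sinh^2r\,dr$ degenerates at the origin, an error of size $\sim |x|^{-2}\|f_k\|_{\hlt(B_k)}^2$ that blows up as the cut moves toward $0$. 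This is precisely why I spread each cut-off over $\sim m_k$ logarithmic scales, giving $|\nabla_{\hhh}\phi_k|\lesssim (m_k|x|)^{-1}$; then, since $B_k$ lies near the origin where $\sinh^2 r/r^2\lesssim1$,
\[
\int|\nabla_{\hhh}\phi_k|^2|f_k|^2\sinh^2 r\,dr\lesssim \frac{1}{m_k^2}\int_{B_k}\frac{\sinh^2 r}{r^2}|f_k|^2\,dr\lesssim \frac{1}{m_k^2}\int_0^\infty |f_k|^2\,dr .
\]
By the classical radial Hardy inequality $\int_0^\infty|f_k|^2\,dr\lesssim\int_0^\infty|\partial_r f_k|^2 r^2\,dr$ (valid near the origin, where the hyperbolic and Euclidean structures coincide) together with $r^2\le\sinh^2r$, the last integral is $\lesssim \|\nabla_{\hhh}f_k\|_{\hlt}^2\le M^2$, so this term is $\lesssim M^2/m_k^2\to0$. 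Hence $\|r_k\|_{\hnorm}\to0$ and is $\le\epsilon$ for $k$ large. I expect this Hardy/logarithmic-spreading step to be the only real obstacle: in contrast to Lemma~\ref{decouplemass}, where the exponential growth of $\sinh^2 r$ at infinity lets an $O(1)$-width shell absorb the cut-off error for free, near the origin the volume element degenerates and the Hardy inequality is the essential new ingredient.
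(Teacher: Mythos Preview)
Your proof is correct, and it is far more detailed than the paper's own treatment: the paper merely says ``the proof is essentially the pigeonhole principle, and the argument is similar to that of Lemma~\ref{decouplemass}'' and leaves it at that. You are right that the near-origin version is not a carbon copy of Lemma~\ref{decouplemass}: because the cutting radius tends to $0$ the gradient of a single-shell cutoff blows up like $|x|^{-1}$, and this must be absorbed somehow. Your remedy --- spreading the cutoff over $m_k\to\infty$ logarithmic scales and then invoking the radial Hardy inequality $\int_0^\infty|f|^2\,dr\lesssim\int_0^\infty|\partial_r f|^2\,r^2\,dr\le\|\nabla_{\hhh}f\|_{\hlt}^2$ --- works cleanly.

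A shorter variant, closer in spirit to the paper's one-line remark (and to the uniform bound \eqref{notsobad} the authors record elsewhere), is to run the pigeonhole not on $\|f_k\|_{H^1(S_i)}^2$ but on the Hardy-augmented quantity
\[
e_i:=\int_{S_i}\Bigl(|\partial_r f_k|^2+|f_k|^2+\frac{|f_k|^2}{r^2}\Bigr)\sinh^2r\,dr,
\]
which still satisfies $\sum_i e_i\lesssim M^2$ by the same Hardy inequality. One then finds a \emph{single} dyadic shell with $e_i$ small, cuts there with a standard bump, and the dangerous term $\int|\nabla\chi_k|^2|f_k|^2\sinh^2r\,dr\lesssim\int_{S_i}|f_k/r|^2\sinh^2r\,dr\le e_i$ is controlled directly. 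This avoids the block structure and the $m_k$-spread cutoffs, but the essential new ingredient --- Hardy near the origin --- is the same as yours.
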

The proof is essentially the pigeonhole principle, and the argument is similar to that of \eqref{decouplemass}.

\section{Proof of Lemma \ref{nonextence} }\label{aaaaa}
We show how to upgrade Lemma \ref{nonextencepre} to Lemma \ref{nonextence}. 

\begin{proof}
With Lemma \ref{nonextencepre}, we need only to show that  if some $u(x)\in H^{1}(\hhh)$ , $u(x)\geq 0$ and $u(x_{0})=0$ for some $x_{0}\in \hhh$, then $u\equiv 0$. 

First, by classical elliptic regularity theory  (\cite{trudinger1968remarks}), one  has that $u$ is smooth.
Now, since $u$ is radial, $u(x)=u(r)$, we have for some $\lambda\in \RRR$
\begin{equation}\label{2222}
u_{rr}+2\coth r u_{r}=-u^{5}+\lambda u.
\end{equation}
Note if at some $r_{0}$, $u(r_{0})=0$, then we must also have $u_{r}(r_{0})=0$ since at $r_{0}$ the nonnegative function $u$ obtains its minimal value. Since  \eqref{2222} is a second order ordinary differential equation, this implies $u\equiv 0$.

The case $r_{0}=0$ warrants special discussion, as at first glance the equation \eqref{2222} appears singular. However, since $u$ is smooth, $u$ is infinitely differentiable at $r_{0}=0$ and one easily deduces that $u(0)=0, u_{r}(0)=0$ and $u_{rr}(0)=0$. Therefore \eqref{2222} is not singular at $0$ and one still gets $u\equiv 0$.
\end{proof}

\section{Proof of Lemma \ref{decouplemass}} \label{bbbbbb}
\begin{proof}
This section is devoted to the proof of Lemma \ref{decouplemass}. Let us assume $R\sim 2^{j_{0}}, R_{k}\sim 2^{j_{k}}$, where $j_{0}, j_{k}$ are integers. Since $R$ is fixed and $\limk R_{k}=\infty$, we have $\limk j_{k}-j_{0}=\infty.$ Thus, by pigeonhole principle, (note $\|f_{k}\|_{H^{1}(H^{3})}$ is bounded), when $k$ is large enough, we will be able to find $j_{0}\ll l_{k}\ll j_{k}$, such that
\begin{equation}
\|f_{k}\|_{H^{1}(H^{3})}\leq \epsilon/1000.
\end{equation}
Then, simply do a (smooth) cut off at at $|x|\sim 2^{l_{k}}$, and one will obtain the desired decomposition. To be precise, let 
\begin{equation}
\chi_{k,1}=
\begin{cases}
1, |x|\leq 2^{l_{k}},\\
0, |x|\geq 2^{l_{k}+1/2}.
\end{cases}
\end{equation}
\begin{equation}
\chi_{ k,2 }=
\begin{cases}
1, |x|\geq 2^{l_{k}+1},\\
0, |x|\leq 2^{l_{k}+1/2}.
\end{cases}
\end{equation}
and let
\begin{equation}
f_{k,1}=f\chi_{k,1}, \quad f_{k,2}=f\chi_{k,2}, \quad r_{k}=f-f_{k,1}-f_{k,2}.
\end{equation}
\end{proof}
\bibliographystyle{alpha}
\bibliography{BG}
\end{document}